\numberwithin{equation}{section}
\theoremstyle{plain}
\newtheorem{theorem}{Theorem}[section]
\newtheorem{lemma}[theorem]{Lemma}
\newtheorem{conjecture}[theorem]{Conjecture}
\newtheorem{proposition}[theorem]{Proposition}
\newtheorem{claim}{Claim}
\theoremstyle{definition}
\newtheorem{definition}[theorem]{Definition}
\theoremstyle{remark}
\def\Mod{\mathrm{Mod}}
\def\cN{\mathcal N}
\def\C{\mathcal C}
\def\T{\tau}
\begin{document}

\title{Minimal pseudo-Anosov translation lengths on the complex of curves}
\author{Vaibhav Gadre \and Chia-Yen Tsai}

\maketitle
{\abstract \noindent We establish bounds on the minimal asymptotic pseudo-Anosov translation lengths on the complex of curves of orientable surfaces. In particular, for a closed surface with genus $g \geqslant 2$, we show that there are positive constants $a_1 < a_2$ such that the minimal translation length is bounded below and above by $a_1/ g^2$ and $a_2/g^2$.}

%%%%%%%%%%%%%%%%%%%%%%%%%%%%%%%%%%%%%%%%%%%
%%%%%%%%%%%%%%%%%%%%%%%%%%%%%%%%%%%%%%%%%%%
\section{Introduction}
%%%%%%%%%%%%%%%%%%%%%%%%%%%%%%%%%%%%%%%%%%%
%%%%%%%%%%%%%%%%%%%%%%%%%%%%%%%%%%%%%%%%%%%
Let $S_{g,n}$ be an orientable surface with genus $g$ and $n$ punctures. For simplicity, we shall drop the subscripts and denote it by $S$. The complex of curves $\C(S)$, is a locally infinite simplicial complex whose vertices are the isotopy classes of essential, non-peripheral, simple closed curves on $S$. A collection of vertices span a simplex if there are representatives of the curves that can be realized disjointly on the surface. Here, we will assume that the surface $S$ is non-sporadic i.e., the complexity $\xi(S)=3g-3+n \geqslant 2$. For sporadic surfaces, the complex of curves $\C(S)$, is either trivial or well-understood.

The mapping class group $\Mod(S)$, is the group of isotopy classes of diffeomorphisms of $S$. This group acts on $\C(S)$ in the obvious way. Thurston classified the elements of $\Mod(S)$ into three types: finite order, reducible or pseudo-Anosov. Given a mapping class $f \in \Mod(S)$, its asymptotic translation length on ${\C}(S)$ is defined to be
\begin{align*}
\ell_\C(f)= \liminf _{j\rightarrow \infty} \frac{d_\C(\alpha, f^j(\alpha))}{j}
\end{align*}
where $\alpha$ is a simple closed curve on $S$. The above limit remains unchanged when the numerator is changed by an additive constant. Hence, by the triangle inequality for $d_\C$, the quantity $\ell_\C(f)$ is independent of the choice of curve $\alpha$.

In \cite{MM}, Masur and Minsky proved that $f \in \Mod(S)$ is pseudo-Anosov if and only if $\ell_\C(f)> 0$. In \cite{Bd-tight}, Bowditch refined this, proving that the set of translation lengths of pseudo-Anosov elements is a closed discrete set in $\mathbb{R}^+$. In fact, Bowditch showed that the asymptotic translation lengths are rational with bounded denominator. We denote the minimal positive number in this set by
\begin{align*}
L_\C(\Mod(S))=\min \{\ell_\C(f) | f \in \Mod(S), \text{ pseudo-Anosov} \}
\end{align*}
For closed surfaces, Farb-Leininger-Margalit \cite{FLM} proved that when $g \geqslant 2$,
\begin{align*}
L_\C(\Mod(S))< \frac{4 \log(2+ \sqrt{3})}{g \log (g-\frac{1}{2})}.
\end{align*}
Here, we find a better upper bound for $L_\C(\Mod(S))$. Moreover, we also show that a lower bound of the same order holds. To be precise, we show:
\begin{theorem}\label{mainthm}
For closed surfaces with $g \geqslant 2$,
\begin{align*}
\frac{1}{162(2g-2)^2+ 6(2g-2)} <  L_\C(\Mod (S)) \leqslant \frac{4}{g^2+g-4}.
\end{align*}
\end{theorem}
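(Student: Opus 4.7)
The theorem bundles an upper bound and a lower bound on $L_\C(\Mod(S))$, which I would prove independently.

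\textbf{Upper bound.} The plan is to exhibit an explicit pseudo-Anosov $\phi_g$ on $S_g$ whose translation length attains $4/(g^2+g-4)$. The natural construction is of Thurston/Penner type: choose filling multicurves $A, B \subset S_g$ depending systematically on $g$ and set $\phi_g = T_A T_B^{-1}$. One then picks a reference curve $\alpha$ and an integer $N = N(g)$ for which $d_\C(\alpha, \phi_g^N(\alpha))$ is small; this yields the bound $\ell_\C(\phi_g) \leq d_\C(\alpha, \phi_g^N(\alpha))/N$. The shape of the denominator $g^2+g-4$ strongly suggests targeting $N = g^2+g-4$ with $d_\C(\alpha, \phi_g^N(\alpha)) \leq 4$. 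Concretely, I would choose $A, B$ so that $\phi_g$ preserves a train track $\tau_g$ along which a carried curve $\alpha$ moves very slowly in $\C(S)$ under iteration; this amounts to arranging the transition matrix of $\phi_g$ on $\tau_g$ to have a particular combinatorial structure, obtained by stringing together many ``slow'' splitting moves before any curve-complex step is forced.

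\textbf{Lower bound.} Here I would use Bowditch's theorem (cited in the excerpt) that $\ell_\C(\phi) = p/q$ is rational with $q$ bounded by a function of $\xi(S)$. Since $\phi$ pseudo-Anosov forces $\ell_\C(\phi) > 0$ by Masur--Minsky, the numerator satisfies $p \geq 1$, so $\ell_\C(\phi) \geq 1/q$, and the problem reduces to proving $q \leq 162(2g-2)^2 + 6(2g-2)$. I would work with an invariant train track $\tau$ for the stable foliation of $\phi$ and bound the combinatorial length of the cycle that $\phi$ traces in the splitting graph of $\tau$, since this combinatorial period controls the denominator of $\ell_\C(\phi)$. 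The factor $(2g-2)^2$ fits the shape (number of branches of a maximal train track, which is $O(2g-2)$) squared, while the additive $6(2g-2)$ absorbs initial and terminal adjustments of the splitting sequence and any finite choices tied to individual branches.

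\textbf{Main obstacle.} The real difficulty is pinning down sharp constants. An asymptotic $L_\C(\Mod(S)) \asymp 1/g^2$ falls out of standard train-track arguments on both sides, but producing a concrete family realising $4/(g^2+g-4)$, and carrying out the splitting-sequence audit that yields $162(2g-2)^2 + 6(2g-2)$, requires delicate quantitative combinatorics. I expect the upper-bound construction to be the creative half, since it demands an optimised family of pseudo-Anosovs whose invariant train tracks admit long slow splitting sequences; the lower bound is primarily bookkeeping within an established framework, but keeping the coefficients genuinely tight is where all the effort goes.
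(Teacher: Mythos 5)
Your lower bound route has a genuine gap. You reduce the problem to showing that the denominator $q$ in Bowditch's rationality theorem satisfies $q \leqslant 162(2g-2)^2+6(2g-2)$, and you propose to get this by bounding ``the combinatorial length of the cycle that $\phi$ traces in the splitting graph of $\tau$,'' asserting that this period controls $q$. Neither step is substantiated, and the second would fail: the number of splittings needed to pass from an invariant track $\tau$ to $\phi(\tau)$ is not bounded by any function of $g$ alone --- it grows with the dilatation of $\phi$ (it is tied to the size of the entries of the transition matrix), so no audit of splitting sequences can give a uniform $O(g^2)$ bound over all pseudo-Anosovs of $S_g$. Likewise, Bowditch's denominator bound comes from acylindricity/tight-geodesic constants and is not known to be polynomial in $g$; invoking it leaves the entire quantitative content of the theorem unproved. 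The paper's argument bounds something different: the number of \emph{iterates of $f$} (not splittings) needed to produce nesting of the polyhedra $PE(\T)$. Using the Bestvina--Handel invariant track (at most $9|\chi(S)|$ branches, the real/infinitesimal dichotomy, and the fact that the real-branch transition matrix is integral Perron--Frobenius), one shows that any real branch maps over every branch of $\T$ within $k < 162\,\chi(S)^2$ iterates, and that any carried measure becomes positive on a real branch within $6|\chi(S)|$ further iterates via the foldable-cusp argument; then the Masur--Minsky nesting lemma gives $d_\C(f^{iw}(\beta),\beta) \geqslant i$ for $w = 6|\chi(S)|+k$, hence $\ell_\C(f) \geqslant 1/w$ for \emph{every} pseudo-Anosov $f$. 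That uniform-in-$f$ iteration bound is the missing idea; a per-map combinatorial period cannot replace it.

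On the upper bound, your overall strategy (an explicit family with $d_\C(\alpha,\phi_g^N(\alpha))$ bounded for $N$ of order $g^2$) is the paper's strategy, but your sketch omits the mechanism that makes it work and overstates the goal: the bound $4/(g^2+g-4)$ is not claimed to be attained, only to be an upper bound coming from one example. A plain Penner map $T_A T_B^{-1}$ with $A\cup B$ filling will not obviously have translation length of order $1/g^2$; the paper's example is $f=\rho\, T_{c_1}T_{b_1}^{-1}T_{a_1}$, twists supported on a single handle composed with a cyclic rotation $\rho$, so that the image of $a_g$ spreads over only two more ``holes'' every $g+1$ iterates. This yields $d_\C(a_g, f^{k}(a_g))\leqslant 2$ for some $k \geqslant (g^2+g-4)/2$, and then Lemma~\ref{power} gives $\ell_\C(f)\leqslant 2/k \leqslant 4/(g^2+g-4)$. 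Without the rotation-plus-small-support mechanism (or an equivalent), the claimed slow motion in $\C(S)$ is not forthcoming.
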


The upper bound is established by bounding $\ell_\C(f)$ in examples of pseudo-Anosov maps and not expected to be sharp. The examples we use are a subset of those considered by Farb-Leininger-Margalit, but we obtain better bounds for $\ell_\C(f)$. The proof of the lower bound follows the same approach as Masur and Minsky \cite{MM}, but by keeping track of more information, we obtain sharper bounds. The additional information comes from the algorithm of Bestvina-Handel that constructs an invariant train track for a pseudo-Anosov map.

The lower bound in Theorem \ref{mainthm} is a part of Theorem \ref{lowerbd} in which we also prove a lower bound for punctured surfaces. To be precise, when $\xi(S)\geqslant 2$ and $n > 0$ we show that
\begin{align*}
\frac{1}{18(2g-2+n)^2+ 6(2g-2+n)} < L_\C(\Mod (S)).
\end{align*}
At the end of the paper, we also discuss upper bounds for some families of punctured surfaces.

\subsubsection*{Acknowledgements:} We thank Chris Leininger for suggesting the project, and numerous discussions during the course of it.

%%%%%%%%%%%%%%%%%%%%%%%%%%%%%%%%%%%%%%%%%%%
%%%%%%%%%%%%%%%%%%%%%%%%%%%%%%%%%%%%%%%%%%%
\section{The complex of curves}
%%%%%%%%%%%%%%%%%%%%%%%%%%%%%%%%%%%%%%%%%%%
%%%%%%%%%%%%%%%%%%%%%%%%%%%%%%%%%%%%%%%%%%%

One is mainly interested in the coarse geometry of $\C(S)$. The curve complex $\C(S)$ is quasi-isometric to its 1-skeleton equipped with the path metric. The 1-skeleton is a locally infinite graph.
In Proposition 4.6 of \cite{MM}, Masur and Minsky showed that:

\begin{proposition} \label{MM-trans}
For a non-sporadic surface $S$, there exists $c>0$ such that, for any pseudo-Anosov mapping class $f$ and any simple closed curve $\alpha$ in $\C(S)$
\[
d_\C(f^n(\alpha), \alpha) \geqslant c \vert n \vert
\]
for all $n \in {\mathbb Z}$.
\end{proposition}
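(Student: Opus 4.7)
The plan is to combine Penner's uniform lower bound on pseudo-Anosov dilatations with the Masur--Minsky projection theorem for \teichmuller geodesics and the Gromov hyperbolicity of $\C(S)$. Let $f \in \Mod(S)$ be pseudo-Anosov with dilatation $\lambda = \lambda(f) > 1$ and invariant \teichmuller geodesic axis $A_f \subset \mathcal{T}(S)$; then $f$ translates $A_f$ isometrically through $\log \lambda$, and Penner's theorem supplies a uniform lower bound $\log \lambda(f) \geqslant \T_0 = \T_0(S) > 0$ valid for every pseudo-Anosov $f \in \Mod(S)$.

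Introduce the systole projection $\pi : \mathcal{T}(S) \to \C(S)$, sending a hyperbolic surface to a curve of shortest length, and invoke the main technical result of \cite{MM}: for any \teichmuller geodesic $G$, the image $\pi(G)$ is an unparametrized $(K,K)$-quasi-geodesic in $\C(S)$, with $K = K(S)$ depending only on the surface. Applied to $G = A_f$, this realizes $\pi(A_f)$ as a bi-infinite quasi-geodesic in the Gromov hyperbolic space $\C(S)$, setwise preserved by $f$. Fix $X_0 \in A_f$ and set $\alpha_0 = \pi(X_0)$; the orbit $\{f^n \alpha_0\}$ progresses monotonically along $\pi(A_f)$, each step corresponding to a \teichmuller jump of size $\log \lambda \geqslant \T_0$. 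Using the thick--thin decomposition of $\mathcal{T}(S)$, each fundamental domain of the $\langle f \rangle$-action on $A_f$ contains a definite length of thick-part geodesic, on which $\pi$ is uniformly coarsely Lipschitz; this yields a uniform positive curve-complex displacement per iterate and gives $d_\C(\alpha_0, f^n \alpha_0) \geqslant c |n|$ with $c = c(S) > 0$. For arbitrary $\alpha \in \C(S)$, the triangle inequality $|d_\C(\alpha, f^n \alpha) - d_\C(\alpha_0, f^n \alpha_0)| \leqslant 2\, d_\C(\alpha, \alpha_0)$ is absorbed by a slight decrease of $c$ for large $|n|$; the case of small $|n|$ follows from the observation that $f(\alpha) \neq \alpha$ for any pseudo-Anosov $f$.

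The main obstacle is that $\pi$ is not globally coarsely Lipschitz on $\mathcal{T}(S)$: thin-part excursions can accumulate arbitrary \teichmuller distance while the projection stays essentially locked at a short curve. The resolution exploits the unparametrized quasi-geodesic property, which forces such excursions to be benign (their endpoints project nearby in $\C(S)$), so that all linear progress must come from thick-part traversals, where Penner's uniform $\T_0$ guarantees positive minimum displacement per iterate and hence uniformity of $c$ over all pseudo-Anosov $f$.
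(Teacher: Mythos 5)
Your route (Penner's dilatation bound plus the Masur--Minsky shadow theorem) is genuinely different from the source of this proposition --- in \cite{MM} it is proved with the train-track nesting machinery ($PE(\T)$ and Lemma~\ref{nesting}), which is exactly what Section~\ref{lbounds} of this paper quantifies --- but as written it has a real gap at the point where uniformity in $f$ must be produced. Penner's constant $\T_0$ bounds the \teichmuller translation length $\log\lambda(f)$ from below, but the proposition is about displacement in $\C(S)$, and nothing in your argument converts one into the other uniformly. Concretely: (a) the claim that ``each fundamental domain of the $\langle f\rangle$-action on $A_f$ contains a definite length of thick-part geodesic'' is unsupported --- the axis of a pseudo-Anosov lies in the $\epsilon_f$-thick part only for an $\epsilon_f$ depending on $f$, and neither the thickness nor the proportion of thick time per period is bounded in terms of $\T_0$ or of $S$ alone; (b) even granting a thick segment, ``$\pi$ is uniformly coarsely Lipschitz on the thick part'' gives an \emph{upper} bound on curve-complex displacement, not the lower bound you need; the statement that would help is that shadows of uniformly thick geodesics are \emph{parametrized} quasi-geodesics, and its constants again depend on the thickness, so the non-uniformity reappears; and (c) the unparametrized quasi-geodesic property you invoke explicitly permits the shadow to stall, so ``a \teichmuller jump of size $\log\lambda\geqslant \T_0$ per iterate'' is compatible with $f^n\alpha_0$ staying in a bounded set of $\C(S)$ for $n$ up to some $N(f)$ with no bound on $N(f)$; your closing paragraph acknowledges this obstacle but resolves it only by reasserting that thick traversals give ``positive minimum displacement per iterate,'' which is the unproved point.

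There is also a smaller defect in the reduction to arbitrary $\alpha$: from $d_\C(\alpha,f^n\alpha)\geqslant d_\C(\alpha_0,f^n\alpha_0)-2d_\C(\alpha,\alpha_0)$ you get the linear bound only for $|n|$ large compared with $d_\C(\alpha,\alpha_0)/c$, while $f^n(\alpha)\neq\alpha$ only gives $d_\C\geqslant 1$; in the intermediate range of $n$ neither estimate yields $c|n|$, so the constant is not uniform in $\alpha$ as claimed (this can be repaired using coarse closest-point projection to the quasi-axis in the $\delta$-hyperbolic space $\C(S)$, but that is an argument you would need to make). The essential missing idea, however, is the one the train-track proof supplies: a bound, uniform over all pseudo-Anosov $f$ on $S$, on how many iterates are needed before the nesting $f^w(PE(\T))\subset int(PE(\T))$ (or any equivalent ``definite progress'' statement) kicks in; dilatation bounds alone cannot play that role.
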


In particular, the proposition shows that the curve complex $\C(S)$ has infinite diameter. In the same paper, Masur and Minsky went on to show that the curve complex is $\delta$-hyperbolic in the sense of Gromov. Proposition~\ref{MM-trans} implies that psuedo-Anosov mapping classes have ``north-south'' dynamics on $\C(S)$ i.e., they act as {\em hyperbolic} elements on $\C(S)$ and have an invariant quasi-axis.

A consequence of Proposition~\ref{MM-trans} is that for non-sporadic surfaces $S$, the minimal asymptotic translation length $L_\C(\Mod(S)) > 0$. In fact, Bowditch showed that the numbers $\ell_\C(f)$ are rational with uniformly bounded denominators \cite{Bd-tight}.

The following fact about the asymptotic lengths of iterates of $f$ is useful for proving bounds.
\begin{lemma}\label{power}
For all integers $m\geqslant1$, $\ell_\C (f^m)=m\ell_\C (f)$.
\end{lemma}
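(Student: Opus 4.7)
My plan is to exploit subadditivity of the sequence $a_j := d_\C(\alpha, f^j(\alpha))$ and then invoke Fekete's lemma. The key observation is that $f$ acts on $\C(S)$ by simplicial automorphisms, hence by isometries of the path metric. Consequently, the triangle inequality yields
\[
a_{j+k} = d_\C(\alpha, f^{j+k}(\alpha)) \leqslant d_\C(\alpha, f^j(\alpha)) + d_\C(f^j(\alpha), f^{j+k}(\alpha)) = a_j + a_k,
\]
so $(a_j)$ is a subadditive sequence of non-negative real numbers.

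By Fekete's lemma, the sequence $a_j/j$ then converges to $\inf_{j \geqslant 1} a_j/j$; in particular its $\liminf$ coincides with this limit, call it $L$, so $\ell_\C(f) = L$. Since $(a_{mj}/(mj))_{j \geqslant 1}$ is a subsequence of a convergent sequence, it too converges to $L$, and therefore
\[
\ell_\C(f^m) = \liminf_{j \to \infty} \frac{d_\C(\alpha, (f^m)^j(\alpha))}{j} = \liminf_{j\to\infty} \frac{a_{mj}}{j} = m \lim_{j\to\infty} \frac{a_{mj}}{mj} = mL = m\,\ell_\C(f).
\]

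There is no substantial obstacle here: the real content is simply that $\ell_\C(f)$ is an honest limit and not merely a $\liminf$. If one wished to avoid quoting Fekete's lemma, the inequality $\ell_\C(f^m) \geqslant m\,\ell_\C(f)$ would follow at once from the general fact that passing to a subsequence can only increase a $\liminf$; the reverse inequality could then be recovered by selecting a near-optimal sequence $k_n \to \infty$ with $a_{k_n}/k_n \to \ell_\C(f)$, writing $k_n = m j_n + r_n$ with $0 \leqslant r_n < m$, and using subadditivity one more time to bound $a_{m(j_n+1)} \leqslant a_{k_n} + a_{m-r_n}$, since the correction term $a_{m-r_n}$ is bounded and $k_n/(j_n+1) \to m$.
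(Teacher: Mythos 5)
Your proof is correct, but it takes a different route from the paper's. The paper proves the two inequalities separately and directly: $m\ell_\C(f)\leqslant \ell_\C(f^m)$ because passing from the full sequence $d_\C(\alpha,f^j(\alpha))/j$ to the subsequence indexed by multiples of $m$ can only increase the $\liminf$, and $\ell_\C(f^m)\leqslant m\ell_\C(f)$ from the termwise bound $d_\C(\alpha,f^{jm}(\alpha))\leqslant m\,d_\C(\alpha,f^j(\alpha))$, which follows from the triangle inequality together with the fact that $f$ acts by isometries. You instead observe that the same triangle-inequality-plus-isometry ingredient gives full subadditivity of $a_j=d_\C(\alpha,f^j(\alpha))$ and invoke Fekete's lemma, so that $\ell_\C(f)$ is an honest limit (equal to $\inf_j a_j/j$), after which the identity $\ell_\C(f^m)=m\ell_\C(f)$ is just the statement that a subsequence of a convergent sequence has the same limit. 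Your approach buys strictly more information (convergence of $a_j/j$, not just existence of the $\liminf$), at the cost of quoting Fekete; the paper's argument is more elementary and self-contained. One small remark on your fallback sketch: the reverse inequality does not require selecting a near-optimal sequence and handling remainders --- the termwise bound $a_{jm}\leqslant m\,a_j$ already gives $\liminf_j a_{jm}/j\leqslant m\liminf_j a_j/j$ directly, which is exactly what the paper does.
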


\begin{proof} From the definition of $\liminf$
\begin{eqnarray*}
\ell_\C(f) &=&  \liminf _{j\rightarrow \infty} \frac{d_\C(\alpha, f^j(\alpha))}{j} \leqslant \liminf _{j\rightarrow \infty} \frac{d_\C(\alpha, f^{jm}(\alpha))}{jm}\\
&=& \frac{1}{m}\liminf _{j\rightarrow \infty} \frac{d_\C(\alpha, f^{jm}(\alpha))}{j}=\frac{1}{m}\ell_\C(f^m)
\end{eqnarray*}
To get the reverse inequality, we use the triangle inequality, followed by the fact that $f$ is an isometry of $\C(S)$ i.e.,
\[
d_\C(\alpha, f^{jm}(\alpha)) \leqslant \sum_{i=1}^m d_\C(f^{(i-1)j}(\alpha), f^{ij}(\alpha)) = m d_\C(\alpha, f^j(\alpha))
\]
Hence $\ell_\C(f^m) \leqslant m \ell_\C(f)$ and we are done.
\end{proof}

%%%%%%%%%%%%%%%%%%%%%%%%%%%%%%%%%%%%%%%%%%%
%%%%%%%%%%%%%%%%%%%%%%%%%%%%%%%%%%%%%%%%%%%
\section{Train Tracks}\label{tt-construction}
%%%%%%%%%%%%%%%%%%%%%%%%%%%%%%%%%%%%%%%%%%%
%%%%%%%%%%%%%%%%%%%%%%%%%%%%%%%%%%%%%%%%%%%

For a detailed discussion of train tracks, see \cite{PeHa}.  We summarize the necessary definitions here.

A train track $\T$ on the surface is an embedded 1-dimensional CW complex with some additional structure.  The edges are called branches and the vertices are called switches.  The branches are smoothly embedded on the interiors, and there is a common point of tangency to all branches meeting at a switch.  This splits the set of branches incident on a switch into two disjoint subsets, which can be arbitrarily assigned as the incoming and outgoing edges at the switch. We assume that the valence of each switch is  at least 3.

A {\em train route} is a regular smooth path in $\T$. In particular, it traverses a switch only by passing from an incoming edge to an outgoing edge or vice versa. A train track $\sigma$ is {\em carried} by $\T$, denoted by $\sigma \prec \T$, if there is a homotopy of the identity map of the surface such that every train route in $\sigma$ is taken to a train route in $\T$. In particular, this means that $\sigma$ can be embedded in an $\epsilon$ neighborhood of $\T$. A simple closed curve is carried by a train track if it is homotopic to a closed train route.

An assignment of non-negative numbers, called weights, to the branches so that at every switch, the sum of the incoming weights equals the sum of the outgoing weights is called a {\em transverse measure} on the train track.  A closed train route induces a counting measure on $\T$.

Following Masur-Minsky \cite{MM}, we shall denote the set $P(\T)$ to be the polyhedron of transverse measures supported on $\T$ and let $int(P(\T)) \subset P(\T)$ be the set of transverse measures on $\T$ which induce positive weights on every branch of $\T$.  A simple closed curve $\alpha$ carried by $\T$ naturally induces a transverse measure supported on $\T$, so $\alpha \in P(\T)$.

A train track is called {\em large} if all the complementary regions are polygons or once-punctured polygons. A train track that has complementary regions ideal triangles or once-punctured monogons is called {\em maximal} or {\em complete}. It is maximal in the sense that it cannot be a sub-track of some other train track.

A train track $\T$ is called {\em recurrent} if there is a transverse measure which is positive on every branch of $\T$.  A train track $\T$ is {\em transversely recurrent} if given a branch of $\T$ there is a simple closed curve on $S$ that crosses the branch and intersects $\T$ transversely and efficiently i.e. the union of $\T$ and the simple closed curve has no complementary bigons. A train track that is both recurrent and transversely recurrent is {\em birecurrent}.

A train track $\sigma$ {\em fills} $\T$ if it is carried by $\T$ and $int(P(\sigma)) \subset int(P(\T))$. For recurrent train tracks, this means that every branch of $\T$ is traversed by some branch of $\sigma$.

For a large train track $\T$, a train track $\sigma$ is called a {\em diagonal extension} of $\T$ if $\T$ is sub-track of $\sigma$, and each branch in $\sigma \setminus \T$ has its endpoints terminate in the cusps of a complementary regions of $\T$.  Let $E(\T)$ denote the set of all recurrent diagonal extensions of $\T$. It is obvious that this is a finite set. Following Masur and Minsky, set
\[
PE(\T) = \bigcup_{\sigma \in E(\T)} P(\sigma)
\]
Further, let $int(PE(\T))$ be the set of measures in $PE(\T)$ that are positive on every branch of $\T$.

We begin with a preliminary lemma of Masur and Minsky which will be useful in Section~\ref{lbounds}; the proof of the lemma can be found in \cite{MM}.
\begin{lemma}[\cite{MM}]\label{carry}
For large recurrent train tracks $\sigma, \T$, if $\sigma$ is carried by $\T$ and fills $\T$, then any $\sigma' \in E(\sigma)$ is carried by some $\T' \in E(\T)$. In particular, there is the inclusion $PE(\sigma) \subset PE(\T)$.
\end{lemma}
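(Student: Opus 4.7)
The plan is to construct $\T'$ directly by transporting the diagonal branches of $\sigma'$ into complementary regions of $\T$ via the carrying of $\sigma$ by $\T$. Fix a tie-neighborhood $N(\T)$ of $\T$ together with a carrying homotopy sending $\sigma$ into $N(\T)$ transverse to the ties. Since $\sigma$ fills $\T$, every branch of $\T$ is traversed by a branch of $\sigma$, and each complementary region of $\sigma$ is naturally associated with a unique complementary region of $\T$ whose boundary it approximates from the interior. Each cusp of a complementary region of $\sigma$ either already sits at a cusp of the associated complementary region of $\T$, or can be moved along the branches of $\T$ bordering the region to such a cusp.

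For $\sigma' \in E(\sigma)$ and a diagonal branch $b$ of $\sigma' \setminus \sigma$, both endpoints of $b$ are cusps of complementary regions of $\sigma$. Isotope $b$ so that its endpoints are pushed to the corresponding cusps of complementary regions of $\T$, keeping the interior of $b$ inside the associated complementary region of $\T$. Let $\tilde b$ denote the resulting arc, and define $\T'$ to be the union of $\T$ with all such arcs $\tilde b$. By construction, $\T$ is a subtrack of $\T'$, the extra branches are diagonals terminating at cusps of complementary regions of $\T$, and $\sigma' \prec \T'$.

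To conclude $\T' \in E(\T)$ we verify recurrence. Let $\mu$ be a positive transverse measure on $\sigma'$. The carrying $\sigma' \prec \T'$ pushes $\mu$ forward to a transverse measure $\mu'$ on $\T'$. Each branch of $\T$ is traversed by a branch of $\sigma$, and hence by a branch of $\sigma'$, so $\mu'$ is positive on the branches of $\T$; each diagonal branch $\tilde b$ of $\T'$ corresponds to a diagonal branch $b$ of $\sigma'$ and therefore receives at least the weight $\mu(b) > 0$. The inclusion $PE(\sigma) \subset PE(\T)$ then follows: any $\mu \in PE(\sigma)$ lies in $P(\sigma')$ for some $\sigma' \in E(\sigma)$, and its pushforward along $\sigma' \prec \T'$ lies in $P(\T') \subset PE(\T)$. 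The main obstacle is verifying that the extensions of diagonal endpoints from $\sigma$-cusps to $\T$-cusps can be arranged consistently across all diagonals of $\sigma'$ while maintaining the smoothness condition at the switches of $\T'$; the filling hypothesis on $\sigma$ is exactly what makes the assignment of each $\sigma$-cusp to a cusp of $\T$ well-defined, since it pins down the combinatorial position of each $\sigma$-cusp relative to the branches of $\T$ bordering its containing region.
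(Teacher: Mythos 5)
Your outline follows the same route as the source it would need to replace (the paper itself does not reprove this lemma; it defers to \cite{MM}, and the Masur--Minsky argument is exactly this construction: push the diagonals of $\sigma'$ into complementary regions of $\T$ and take $\T'$ to be $\T$ together with the pushed diagonals). Your recurrence check is fine: a positive measure on $\sigma'$ pushes forward to a measure on $\T'$ that is positive on the branches of $\T$ because $\sigma$ fills $\T$, and positive on each new diagonal because it is traversed by the $\sigma'$-diagonal that produced it; the deduction $PE(\sigma)\subset PE(\T)$ from $\sigma'\prec\T'$ is also correct.

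The genuine gap is the step you yourself flag as ``the main obstacle'': it is not a loose end but the actual content of the lemma, and it is asserted rather than proved. You need to extract from the filling hypothesis that (i) each complementary region $P$ of $\sigma$ contains exactly one complementary region $Q$ of $\T$ (a priori a region of $\sigma$ could contain several regions of $\T$, or none at all, lying entirely inside the tie neighborhood), and (ii) the cusps of $P$ correspond bijectively to the cusps of $Q$, compatibly with the cyclic order along the boundary. Without (ii), the instruction ``move each endpoint along the branches of $\T$ bordering the region to such a cusp'' is ambiguous, and bad choices really do break the construction: a pushed diagonal can become parallel to a branch of $\T$, so that $\T\cup\tilde b$ has a complementary bigon and is not a legitimate diagonal extension; two distinct diagonals of $\sigma'$ in the same region can become parallel after pushing; and a diagonal whose two endpoints are sent to the same cusp can become inessential. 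Ruling these out is precisely where filling must be used --- for instance by showing that from each cusp of $Q$ the channel of $P$ between the innermost strands of $\sigma$ running over the two flanking branches of $\T$ terminates at a unique cusp of $P$, which sets up the order-preserving bijection. As written, the proposal records the correct construction but leaves its well-definedness, i.e.\ the point of the lemma, unverified.
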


\subsection*{The nesting lemma of Masur and Minsky:}

Given a set $A$ in $\C(S)$, let ${\cal N}_1(A)$ denote the 1-neighborhood of $A$ in $\C(S)$. In \cite{MM}, Masur and Minsky showed the following important result:

\begin{lemma}[Nesting lemma]\label{nesting}
Let $\T$ be a large birecurrent train track. Then
\[
{\cal N}_1(int(PE(\T)) \subset PE(\T).
\]
\end{lemma}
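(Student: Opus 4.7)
The plan is to unwind the statement and reduce to a concrete question about curves. A point $\mu\in int(PE(\T))$ is a transverse measure carried by some $\sigma\in E(\T)$ whose induced weights are positive on every branch of $\T$. Interpreting the $1$--neighborhood in $\C(S)$ on the level of vertices, it suffices to show: if $\alpha$ is an essential simple closed curve whose counting measure lies in $int(PE(\T))$, and $\beta$ is any essential simple closed curve disjoint from $\alpha$, then $\beta\in PE(\T)$, i.e.\ there exists a recurrent diagonal extension $\tau'\in E(\T)$ carrying $\beta$. Since $PE(\T)$ is closed under the inclusion of $\alpha$ itself, the case $\beta=\alpha$ is trivial, and we may assume $\beta$ and $\alpha$ are disjoint.

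The heart of the argument is a positional statement about $\beta$ in a fibered neighborhood $N(\T)$. Because the weights of $\alpha$ on $\T$ are strictly positive, realizing $\alpha$ as a train route of $\sigma$ shows that inside $N(\T)$ the curve $\alpha$ consists of strands running along every branch of $\T$, transverse to the ties. First I would isotope $\beta$ into efficient position with respect to $\sigma$ and $\alpha$, so that $|\beta\cap\sigma|$ is minimal. Disjointness from $\alpha$ then forces every arc of $\beta\cap N(\T)$ to be transverse to the ties (any tie-transverse arc of $\beta$ would be pinched between strands of $\alpha$ and could be removed by isotopy). Thus after isotopy $\beta\cap N(\T)$ is a disjoint union of train routes along $\T$. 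Each remaining arc $a$ of $\beta$ lies in a single complementary region $R$ of $\T$, with endpoints on $\partial N(\T)$. Since $R$ is a polygon or once-punctured polygon with cusps at its corners, I would slide the endpoints of each such $a$ along $\partial R$ to cusps of $R$; this is an ambient isotopy and the straightened arcs can be taken pairwise disjoint because $\beta$ is simple.

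Let $\tau'$ be the union of $\T$ together with the straightened diagonal arcs used by $\beta$; by construction $\beta$ is a closed train route on $\tau'$, each diagonal of $\tau'\setminus\T$ terminates at cusps of complementary regions of $\T$, and the smoothing at the cusps gives a well-defined train-track structure, so $\tau'$ is a diagonal extension of $\T$. For recurrence, combine a positive transverse measure on $\T$ supplied by the recurrence of $\T$ with the counting measure of $\beta$ on the new diagonals: the sum is strictly positive on every branch of $\tau'$, so $\tau'\in E(\T)$ and $\beta\in P(\tau')\subset PE(\T)$. The main obstacle I anticipate is the positional step in the second paragraph: one must carefully check that the isotopy that removes tie-transverse arcs of $\beta$ can indeed be performed using only the filling property of $\alpha$ on $N(\T)$, and that the subsequent straightening of arcs in complementary regions preserves both simplicity of $\beta$ and the validity of the resulting smoothings at cusps. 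Everything else is bookkeeping once this picture is established.
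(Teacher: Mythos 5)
Your reduction is the same as the paper's: it suffices to show that if $\alpha$ is a curve carried by a diagonal extension $\sigma$ of $\T$ with positive weight on every branch of $\T$, and $\beta$ is disjoint from $\alpha$, then $\beta$ is carried by some recurrent diagonal extension of $\T$; and your closing bookkeeping (the straightened arcs give a diagonal extension $\tau'$, which is recurrent because a positive measure on $\T$ extended by zero plus the counting measure of $\beta$ is positive on all branches of $\tau'$) is fine. The problem is the step you yourself flag as the anticipated obstacle: the claim that after minimizing $|\beta\cap\sigma|$, disjointness from $\alpha$ forces every arc of $\beta\cap N(\T)$ to be transverse to the ties and forces the arcs outside $N(\T)$ to be slidable to cusp-to-cusp diagonals. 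That claim is essentially the entire content of the lemma, and the one-line justification offered (such an arc would be ``pinched between strands of $\alpha$''; note also the slip --- the arcs you want to exclude are the ones running along ties across a branch, not the ``tie-transverse'' ones) only rules out arcs that cross a branch rectangle from one horizontal boundary component to the other. It says nothing about arcs that enter and exit $N(\T)$ through the same side of the horizontal boundary without crossing any branch in the tie direction, about the behaviour at switches, or about arcs in a complementary region whose endpoints lie on the same smooth piece of $\partial N(\T)$ with no cusp between them, or which wind around the puncture of a once-punctured polygon. Minimal intersection with the $1$-complex $\sigma$ does not by itself yield a carried-or-efficient dichotomy for position relative to a train track; this is exactly why Masur and Minsky assumed transverse recurrence and used hyperbolic geodesic representatives to obtain good position.

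The paper's proof supplies the missing device. Using the positive weights $\alpha(b)$ on the branches of $\sigma$, one builds a neighborhood of $\sigma$ by gluing rectangles $R(b)$ of width $\alpha(b)$, foliated horizontally and vertically, so that $\alpha$ is a union of horizontal leaves; letting $L$ be the union of the vertical sides of the rectangles, the surface decomposes into rectangles $X_i$ and even-gons $Y_j$ (the latter containing the complementary polygons of $\sigma$) whose boundary sides alternate between arcs of $L$ and arcs of $\alpha$. One then minimizes $|\beta\cap L|$ --- not $|\beta\cap\sigma|$ --- and, because each complementary piece is a disk meeting $\alpha$ only along boundary sides, every arc of $\beta\setminus L$ must lie in a single piece and join $L$-sides, hence traverses a branch of $\sigma$ or a diagonal of a complementary polygon. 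This decomposition is what legitimizes the straightening you perform in your third paragraph; without it (or an equivalent substitute), your positional step asserts the conclusion rather than proving it, so as written the proposal has a genuine gap.
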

In other words, if $\alpha$ is a curve carried by a diagonal extension of $\T$ such that $\alpha$ passes through every branch of $\T$, and $\beta$ is a curve disjoint from $\alpha$ then $\beta$ is also carried by some diagonal extension of $\T$.

The original lemma in Masur and Minsky requires that $\T$ be birecurrent. We show below that the hypothesis of transverse recurrence can be dropped. The proof here was suggested to us by Chris Leininger.

\begin{proof}
Let $\T$ be a large recurrent train track and let $\alpha$ be a curve in $int(PE(\T))$. Let $\sigma$ be a diagonal extension of $\T$ carrying $\alpha$ such that $\alpha$ passes over every branch of $\sigma$ i.e., to get $\sigma$ we add to $\T$ only as many diagonals as necessary. Thus, $\alpha \in int(P(\sigma))$. We claim:

\begin{claim}
Let $\beta$ be a curve disjoint from $\alpha$. Then, $\beta \in PE(\sigma)$.
\end{claim}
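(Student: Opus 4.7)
The plan is to exhibit an explicit diagonal extension $\sigma' \in E(\sigma)$ that carries $\beta$. First, using the hypothesis $\alpha \in \mathrm{int}(P(\sigma))$, realize $\alpha$ as an embedded closed train route inside a tie neighborhood $N(\sigma)$ of $\sigma$. Because $\alpha$ has positive weight on every branch of $\sigma$, its intersection with each branch-rectangle of $N(\sigma)$ consists of parallel strands that meet every tie, and every component of the horizontal boundary of $N(\sigma)$ is flanked on its interior side by an outermost $\alpha$-strand.

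Next, isotope $\beta$ in the complement of $\alpha$ to be transverse to the ties of $\sigma$, in such a way that $\beta$ and $\sigma$ cobound no bigons and the total number of intersections of $\beta$ with the ties is minimal. Under these assumptions, each component of $\beta \cap N(\sigma)$ can be taken to run tie-to-tie, parallel to $\alpha$, inside a single branch-rectangle: any arc with both endpoints on a horizontal side of a rectangle would, by disjointness from $\alpha$, either cobound a bigon with $\sigma$ or allow intersections with the ties to be reduced. Any arc of $\beta$ in a complementary region $P$ of $\sigma$ that is $\partial P$-parallel in $P$ can similarly be isotoped away, again reducing intersection.

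The main step is to show that each remaining arc of $\beta$ in a complementary region $P$ can be isotoped, without crossing $\alpha$, so that its endpoints lie at cusps of $P$. In the original Masur--Minsky argument, transverse recurrence supplies dual curves that anchor such arcs; here that role will be played by $\alpha$ itself. Slide an endpoint of such an arc along its edge of $P$ toward an adjacent cusp; this sliding corresponds to appending a short train route inside the neighboring branch-rectangle of $N(\sigma)$. Disjointness from $\alpha$ permits the sliding, because the outermost $\alpha$-strand along that edge turns around the switch at the cusp and continues into the next rectangle, leaving a thin corridor between itself and the horizontal boundary through which $\beta$ can be routed to emerge exactly at the cusp.

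Finally, form $\sigma'$ by adjoining to $\sigma$, as new diagonal branches, the cusp-to-cusp arcs produced in the previous step; by construction $\beta$ is carried by $\sigma'$. To see that $\sigma'$ is recurrent, observe that for sufficiently small $\varepsilon > 0$ the transverse measure $\alpha + \varepsilon \beta$ is positive on every branch of $\sigma'$: $\alpha$ is positive on every branch of $\sigma$, and $\beta$ is positive on each of the newly added diagonals. Hence $\sigma' \in E(\sigma)$ and $\beta \in P(\sigma') \subset PE(\sigma)$. The principal obstacle is the third step, that is, justifying that the endpoints of arcs in complementary regions can indeed be slid to cusps without the aid of transverse recurrence; this is precisely where the filling hypothesis $\alpha \in \mathrm{int}(P(\sigma))$ is used in an essential way.
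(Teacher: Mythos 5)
Your overall strategy is the right one---use the hypothesis $\alpha\in \mathrm{int}(P(\sigma))$ as a substitute for transverse recurrence, put $\beta$ in minimal position, and reroute the arcs of $\beta$ in complementary regions so that they become cusp-to-cusp diagonals---and your closing observation that $\alpha+\varepsilon\beta$ certifies recurrence of the resulting extension is a nice point the paper leaves implicit. But there is a genuine gap exactly where you flag it: your third step is asserted rather than proved, and the justification you sketch rests on a false picture. An outermost strand of $\alpha$ cannot ``turn around the switch at the cusp and continue into the next rectangle'': passing between the two branches that flank a cusp is an illegal turn, so the strand continues smoothly across the switch into a branch on the opposite side, and it is the horizontal boundary of $N(\sigma)$, not $\alpha$, that turns around the cusp. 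The corridor along an edge $e$ of a complementary region $P$ therefore ends, at each end of $e$, in a short subarc of the switch tie between the cusp and the outermost strand, and the real issue is \emph{which} cusp an endpoint must be slid to: if beyond its endpoint $\beta$ runs in the corridor toward the cusp $c_1$ at one end of $e$ and crosses that tie, the corridor run must be absorbed into the complementary arc so that it ends at $c_1$ (the turn from a diagonal at $c_1$ across the switch is legal), whereas sliding the endpoint to the opposite cusp $c_2$ of $e$ would force an illegal turn from the diagonal back into a branch of $e$, which lies on the same side of that switch. Your sketch never engages with legality of turns or with this choice; relatedly, the assertion in your second step that every component of $\beta\cap N(\sigma)$ runs tie-to-tie inside a single branch rectangle is false as stated, since components enter and leave through the horizontal boundary and traverse many rectangles---these corridor runs are precisely what the argument must handle.

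The paper supplies the missing mechanism in a different way, and the comparison is instructive. Instead of cutting along $\partial N(\sigma)$ and repairing arcs by hand, it cuts $S$ along $\alpha\cup L$, where $L$ is the union of the ties at the switches. This decomposes $S$ into rectangles $X_i$ (two $L$-sides, two $\alpha$-sides, each inside a branch rectangle) and, for each complementary polygon $Z_j$, an even-gon $Y_j$ whose sides alternate between arcs of $\alpha$ and arcs of $L$ located at the cusps of $Z_j$. After minimizing $\vert\beta\cap L\vert$, disjointness from $\alpha$ forces every arc of $\beta\setminus L$ to join $L$-sides; an arc in an $X_i$ traverses a branch, an arc in $Y_j$ joining consecutive $L$-sides traverses a side of $Z_j$, and one joining non-consecutive $L$-sides traverses a diagonal. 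Because $Y_j$ already contains the corridors along the sides of $Z_j$, the corridor runs are automatically grouped with the complementary arcs and their endpoints already sit on the tie arcs next to the cusps---which is exactly what your sliding step is trying to arrange. To complete your write-up you would need to prove your step 3 in essentially this form.
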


\begin{proof}
For each branch $b$ of $\sigma$, denote by $\alpha(b)$ the weight assigned to $b$ by $\alpha$. Since $\alpha \in int(P(\sigma))$, the weights $\alpha(b) > 0$ for all $b$. 

Consider $\sigma$ as an abstract train track. To each branch $b$, assign a rectangle $R(b)$, of length 1 and width $\alpha(b)$. Foliate each rectangle by the product foliations i.e., by horizontal and vertical lines. The weights $\alpha(b)$ satisfy the switch conditions. So the rectangles glue along their widths in a consistent manner to give a neighborhood $\cN$ of $\sigma$. See Figure 1.

\begin{figure}[htb]
\begin{center}
\ \psfig{file=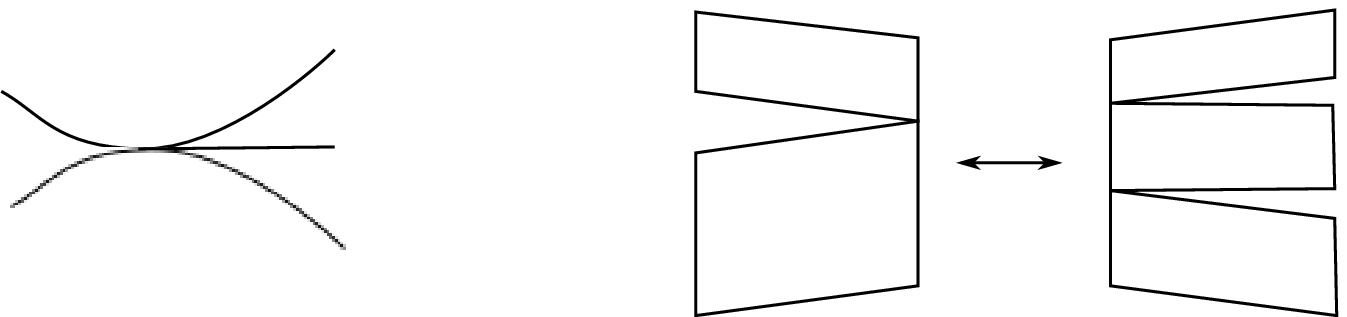, height=1truein, width=5truein} \label{glue-rect} \caption{Gluing rectangles}
\end{center}
\setlength{\unitlength}{1in}
\begin{picture}(0,0)(0,0)
\put(0.6,1.4){1}\put(0.6,0.9){2}\put(2.2,1.5){3}\put(2.2,1.15){4}\put(2.2,0.8){5}\put(2.9,1.5){$R(1)$}\put(2.9,0.9){$R(2)$}\put(5.9,1.5){$R(3)$}\put(5.9,1.2){$R(4)$}\put(5.9,0.8){$R(5)$}
\end{picture}
\end{figure}

The foliations also glue up to give a pair of singular foliations of $\cN$, which we continue to call horizontal and  vertical. The horizontal foliation is obtained from a cylinder neighborhood of $\alpha$, foliated by leaves parallel to $\alpha$, and with parts of its boundary glued together. In particular, we view $\alpha$ as a leaf in the horizontal foliation of $\cN$. The vertical foliations is by {\em ties} for $\sigma$. The components of the boundary $\partial \cN$, are each a finite union of arcs of singular leaves of the horizontal foliation, and correspond precisely to the complementary polygons of $\sigma$. In fact, $\cN$ admits an embedding into the surface $S$ as a neighborhood of $\sigma$.

The union of vertical sides of all rectangles is a union of leaves of the vertical foliation. Denote this union as $L$. Let $Z_1, \cdots, Z_u$ be the complementary polygons of $\sigma$. The key observation is that each $Z_i$ (that has say $k$ sides) is contained in a unique $2k$-gon $Y_i$ whose sides in a cyclic order, are alternatively arcs of $L$ and arcs of $\alpha$. See Figure 2. A side of $Z_i$ is a union of branches, and the rectangles corresponding to the branches are shown. The dotted lines in the union of these rectangles are the sides of the $2k$-gon that are arcs of $\alpha$. Each of these arcs is a piece of $\alpha$ first encountered as we move out from $Z_i$ across its sides. 

\begin{figure}[htb]
\begin{center}
\ \psfig{file=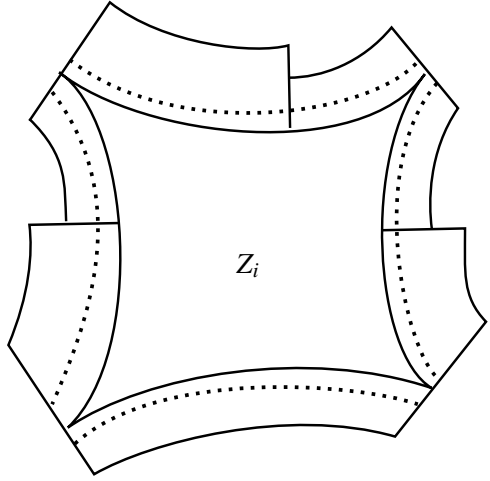, height=2.5truein, width=2.5truein} \label{2k-gon} \caption{The $2k$-gon}
\end{center}
\setlength{\unitlength}{1in}
\begin{picture}(0,0)(0,0)
\put(3.2,1.7){$Z_i$}
\end{picture}
\end{figure}

The surface $S$ decomposes into a union of the even-gons and a set of rectangles $X_1, \cdots, X_v$. For each $X_i$, a pair of opposite sides are arcs of $L$, and the other pair of opposite sides are arcs of $\alpha$. Also,  each $X_i$ is contained in an original rectangle $R(b_i)$, and that $R(b_i)$ contains no other $X_k$. 

Keeping $\beta$ disjoint from $\alpha$, we isotope $\beta$ to minimize the number of intersection points with $L$. By construction, each arc in $\beta \setminus \beta \cap L$ has to be contained entirely in either a single rectangle $X_i$ or a single even-gon $Y_j$, and must connect a $L$-side to another $L$-side. A arc inside $X_i$ connecting its $L$-sides traverses the branch $b_i$. If an arc in $Y_j$ connects consecutive $L$-sides in a cyclic order on the $L$-sides of $Y_j$, then it traverses a side of $Z_j$, which is a union of branches of $\sigma$. On the other hand, if an arc in $Y_j$ connects non-consecutive $L$-sides, then it traverses a diagonal of $Z_j$. 

It follows that $\beta$ is carried by a diagonal extension of $\sigma$, proving the claim.
\end{proof}

\noindent Diagonal extensions of $\sigma$ are also diagonal extensions of $\T$. So, the claim implies that $\beta \in PE(\T)$, finishing the proof of Lemma~\ref{nesting}.
\end{proof}

%%%%%%%%%%%%%%%%%%%%%%%%%%%%%%%%%%%%%%%%%
\section{The Bestvina-Handel algorithm}\label{BH}
%%%%%%%%%%%%%%%%%%%%%%%%%%%%%%%%%%%%%%%%%

\begin{definition}
Given a pseudo-Anosov mapping class $f\in \Mod(S)$, a train track $\T$ is an invariant train track of $f$ if $\T$ is large and recurrent, and $f(\T) \prec \T$.
\end{definition}

\subsubsection*{The Bestvina-Handel train track:}
The Bestvina-Handel algorithm takes as input a punctured surface with a pseudo-Anosov map $f$ and constructs an invariant train track $\T$ for $f$. The algorithm extends to closed surfaces as follows: Given a pseudo-Anosov map $f$ of a closed surface, a singularity of the stable foliation has a finite orbit under $f$. After puncturing the surface at these orbit points, the map $f$ restricts to a pseudo-Anosov map of the punctured surface. Running the Bestvina-Handel algorithm for the punctured surface yields a train track $\T$ that is also an invariant train track for the closed surface.
For details about the algorithm, we refer to \cite{BH}. Here, we present the features of the track $\T$ that we need in the proof of Theorem~\ref{lowerbd}.

\begin{enumerate}
\item The branches of $\T$ are essentially of two types: {\em real} and {\em infinitesimal}. The reason for this classification is that in passing from $\T$ to the associated Markov partition for the stable foliation, only the real branches correspond to rectangles. The algorithm can be carried out such that the total number of branches of $\T$ is bounded above by $ 9 \vert \chi(S) \vert - 3n$, and the number of real branches $r < 3 \vert \chi(S) \vert - 3$, where $\chi(S)$ is the Euler characteristic of $S$.

\item Along with the track $\T$, the algorithm gives a map $h: \T \to \T$ taking switches to switches, that is efficient in a certain sense. See Lemma 3.1.2 of \cite{BH}. There is a homotopy of $S$ sending $f(\T)$ into $\T$ such that the resulting map from $\T$ to itself is $h$. Since $h$ maps switches to switches, there is an unambiguously defined transition matrix $M$ with entries corresponding to ordered pairs of branches in $\T$ such that the entry corresponding to the pair $(b_1, b_2)$ counts the number of times $h(b_2)$ passes over $b_1$. Also, because switches are mapped to switches, the transition matrix for $f^k$ is $M^k$. Bestvina and Handel showed that the square submatrix $M_{\cal R}$ of $M$ obtained by restricting to the set of real branches ${\cal R}$ is irreducible, and in fact, an {\em integral Perron-Frobenius} matrix. We define a Perron-Frobenius matrix below:
\begin{definition}
A matrix $M$ is
\begin{enumerate}
\item irreducible if for any $(i,j)$, there exist a positive integer $s$, such that $M^s$ has a positive $(i,j)$-th entry.
\item non-negative if every entry of $M$ is non-negative.
\item Perron-Frobenius if it is irreducible and non-negative.
\end{enumerate}
\end{definition}
Additionally, for every infinitesimal branch in $\T$, there is a real branch such that some iterate of it passes over the infinitesimal branch.

A consequence of the above discussion is the lemma that follows: With $n$ as the number of punctures, set $c_0 = 162$ and $c_n = 18$ for all $n > 0$.
\begin{lemma}\label{p-matrix}
Given any real branch $\beta$, there is positive integer $k < c_n\chi(S)^2$ such that $f^k(\beta)$ passes over every branch of $\T$.
\end{lemma}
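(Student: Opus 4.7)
I will model iteration of $h$ as walks in a directed graph. Let $G$ be the digraph whose vertices are the branches of $\T$, with a directed edge $b \to b'$ precisely when $h(b)$ passes over $b'$, i.e.\ when $M(b',b) \geq 1$. A walk of length $k$ in $G$ from $\beta$ to $b'$ is then equivalent to $M^k(b',\beta) \geq 1$, which is the same as saying that $f^k(\beta)$ passes over $b'$. So the task reduces to producing a single integer $k < c_n |\chi(S)|^2$ such that every vertex of $G$ is reached from $\beta$ by some walk in $G$ of length exactly $k$.

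\textbf{The punctured case ($n > 0$).} I treat real targets and infinitesimal targets separately. For real targets, $M_{\mathcal R}$ is Perron--Frobenius of size $r \times r$ with $r < 3|\chi(S)| - 3$; together with the pseudo-Anosov structure (which forces the dilatation $\lambda$ to be the unique eigenvalue of maximum modulus, its eigenvector being essentially the unique transverse measure on the stable foliation), this makes $M_{\mathcal R}$ primitive. Wielandt's theorem then bounds its primitivity exponent by $p \leq (r-1)^2 + 1 < 9|\chi(S)|^2$, so for every $K \geq p$ and every real branch $\beta'$ there is a walk of length exactly $K$ from $\beta$ to $\beta'$ in $G$. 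For each infinitesimal branch $\delta$, item 2 in the features list of $\T$ furnishes a real branch $\beta_\delta$ and a walk in $G$ from $\beta_\delta$ to $\delta$; choose one of shortest length $k_\delta$, so that $k_\delta$ is bounded by the total number of branches of $\T$, which is at most $9|\chi(S)| - 3n$. Now set $K := p + \max_\delta k_\delta$. For a real $\beta'$, a walk of length $K$ from $\beta$ to $\beta'$ exists since $K \geq p$. For an infinitesimal $\delta$, concatenate a walk of length $K - k_\delta \geq p$ from $\beta$ to $\beta_\delta$ (supplied by primitivity) with the chosen length-$k_\delta$ walk from $\beta_\delta$ to $\delta$. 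Since $|\chi(S)| \geq 2$, one concludes $K < 9|\chi(S)|^2 + 9|\chi(S)| < 18|\chi(S)|^2 = c_n|\chi(S)|^2$.

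\textbf{The closed case.} The Bestvina--Handel track is built after puncturing $S$ at the finite orbit of singularities of the stable foliation of $f$. By the Euler--Poincar\'e formula this orbit has size at most $2|\chi(S)|$, so the punctured surface $S'$ satisfies $|\chi(S')| \leq 3|\chi(S)|$. Applying the punctured bound just proved to $S'$ gives $k < 18|\chi(S')|^2 \leq 18 \cdot 9|\chi(S)|^2 = 162|\chi(S)|^2 = c_0|\chi(S)|^2$.

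\textbf{Main obstacle.} The subtle step is the primitivity of $M_{\mathcal R}$. The definition of ``Perron--Frobenius'' in the excerpt (irreducible and non-negative) does not by itself imply primitivity; if $M_{\mathcal R}$ had period $d > 1$ then the real branches would split into $d$ cyclic reachability classes and no single $k$ could simultaneously reach vertices in different classes, so this issue is essential rather than cosmetic. Ruling out $d > 1$ must invoke the pseudo-Anosov structure --- uniqueness of the stable transverse measure, or directly the mixing/efficiency properties of the Bestvina--Handel map --- and this is the step on which I would spend the most care.
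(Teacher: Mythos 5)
Your reduction to walks in the transition digraph, your treatment of the infinitesimal branches by concatenating a long real-to-real walk with a shortest walk (of length at most the number of branches), and your closed-surface reduction via Euler--Poincar\'e (at most $2|\chi(S)|$ singularities, so $|\chi(S')|\leqslant 3|\chi(S)|$, recovering the constant $162$) are all sound, and Wielandt's bound $(r-1)^2+1$ is even sharper than what the paper uses. But the proof has a genuine gap exactly at the point you yourself flag: the primitivity of $M_{\mathcal R}$. The justification you sketch does not work. Uniqueness of the transverse measure on the stable foliation corresponds to uniqueness (up to scale) of the \emph{non-negative} eigenvector, and every irreducible non-negative matrix has that property whether or not it is primitive; it does not imply that the dilatation $\lambda$ is the unique eigenvalue of maximum modulus. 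If $M_{\mathcal R}$ had period $d>1$, the extra peripheral eigenvalues $\lambda e^{2\pi i j/d}$ would have eigenvectors that are not non-negative, so they are invisible to any argument based on the uniqueness of the stable measure. Ruling out $d>1$ really does require a separate dynamical input (for instance topological mixing of pseudo-Anosov maps combined with the Markov property of the rectangles attached to the real branches), and since every bound in your argument flows through Wielandt's theorem, the proof is incomplete without it.

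It is worth noting that the paper deliberately avoids needing primitivity of $M_{\mathcal R}$. It uses only irreducibility to find $q\leqslant r$ such that $(M_{\mathcal R})^q$ has a positive diagonal entry, observes that $(M_{\mathcal R})^q=(M^q)_{\mathcal R}$ is the real transition matrix of the pseudo-Anosov map $f^q$ on the same invariant track and hence is again an integral Perron--Frobenius matrix, and then applies Proposition 2.4 of \cite{Ts} to conclude that $(M_{\mathcal R}^q)^{2r}$ is positive, giving $p=2rq\leqslant 2r^2$; the infinitesimal branches are then absorbed by a stabilization argument just as in your concatenation step. So your route would be a legitimate (and slightly sharper) alternative, but only after you supply an actual proof of aperiodicity of $M_{\mathcal R}$ --- or replace that step by the paper's diagonal-entry argument, which needs no aperiodicity at all.
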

\begin{proof}
In \cite{BH}, Bestvina and Handel show that the transition matrix $M$ for $\T$ has the form:
\begin{align*}
M=\left( \begin{array}{cc}
 A & B\\
 0  & M_{\cal R}\\
 \end{array} \right),
\end{align*}
where $M_{\cal R}$ is an $r \times r$ integral Perron-Frobenius matrix.
By Perron-Frobenius theory \cite{Gm}, there exist a positive integer $q \leqslant  r$ such that $(M_{\cal R})^q$ has a positive diagonal entry.  From the form for the transition matrix $M$, the matrix $(M_{\cal R})^q$ equals the square submatrix $(M^q)_{\cal R}$ of $M^q$ obtained by restricting to ${\cal R}$, so $M_{\cal R}^q:=(M_{\cal R})^q=(M^q)_{\cal R}$.  Since $M^q$ is the transition matrix for the iterate $f^q$, the matrix $M^q_{\cal R}$ is still integral Perron-Frobenius matrix. By Proposition 2.4 in \cite{Ts} applied to $M^q_{\cal R}$, we know that $(M^q_{\cal R}) ^{2r}$ is a positive matrix.

If we set $p=2rq$, then for any real branch $\beta \in {\cal R}$, the path $f^p(\beta)$ crosses all real branches. This means that ${\cal R} \subseteq f({\cal R})$. Since every infinitesimal branch gets passed over by an iterate of some real branch, this inclusion is strict. Iterating, we get the sequence ${\cal R} \subset f({\cal R}) \subset f^2({\cal R}) \cdots$ where the inclusion remains strict as long as$f^{j+1}({\cal R})$ spreads over a larger set of infinitesimal branches than $f^j({\cal R})$. Let $i$ be the smallest positive integer such that the sequence stabilizes i.e., $f^i({\cal R}) = f^{i+1}({\cal R})$. Then $i$ is less than or equal to the number of infinitesimal branches and $f^i({\cal R}) = \T$. Since the total number of branches in $\T$ is bounded above by $9\vert \chi(S)\vert-3n$, the number of infinitesimal branches in $\T$ is bounded above by $9 \vert \chi(S) \vert-3n - r$. Hence, $i \leqslant 9 \vert \chi(S) \vert-3n - r$.

Set $k=p+i$. Then, for any real branch $\beta \in {\cal R}$, the path $f^k(\beta)$ crosses all branches of $\T$. It remains to give an upper bound for $k$ in terms of $\chi(S)$. Recall that $0<r=|{\cal R}| \leqslant 3|\chi(S)|-3$. For a surface $S$ with punctures i.e., $n>0$, we have the bound
\begin{align*}
k &= p+i= 2rq+i \leqslant 2r^2 + (- 9\chi -3n -r) \leqslant r(2r-1) - 9\chi -3n\\
&<(-3\chi - 3)(-6\chi - 7) -9 \chi = 18\chi^2 + 30\chi +21 < 18\chi^2
\end{align*}
where $\chi = \chi(S_{g,n})< 0$. For a closed surface $S_g$, the total number of branches, and hence $r+i$, is still bounded above by $-9\chi(S)$. So
\begin{align*}
k = p + i = 2rq+i \leqslant 2r^2+i < 2(r+i)^2 \leqslant 2(-9\chi)^2=162 \chi^2
\end{align*}
where $\chi=\chi(S_g)< 0$.
\end{proof}

\item We shall regard a cusp of a complementary region of $\T$ as {\em foldable}, if the branches $b_1$ and $b_2$ that flank it {\em fold} under some iterate i.e., there is some iterate such that the  paths $h^j(b_1)$ and $h^j(b_2)$ starting from the same initial switch pass over the same initial branch $b$. By Property (I2) in Section 4 of \cite{BH}, the algorithm is carried out such that the branch $b$ that they fold over is always real.

Let $\sigma \in E(\T)$, and let $\gamma$ be a simple closed curve carried by  $\sigma$. We have the following lemma:
\begin{lemma}\label{inc-fold}
If $\gamma$ does not pass over any real branch of $\T$, then $\gamma$ is incident on a foldable cusp.
\end{lemma}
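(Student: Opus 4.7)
The plan is to combine the combinatorial structure of the Bestvina--Handel track with property~(I2) to locate a foldable cusp along $\gamma$. First, I would show that $\gamma$ must traverse at least one diagonal of $\sigma \setminus \T$. The infinitesimal branches of $\T$ form a disjoint union of polygonal subgraphs, one around each singularity of the invariant foliation. At each vertex of such a polygon the two adjacent polygon sides lie on the same side of the switch, forming the polygon's interior cusp, while the real branch incident at that vertex lies on the opposite side. A train-route transition between two infinitesimal branches at such a switch is therefore impossible, and no closed train route can lie entirely in the infinitesimal subgraph of $\T$. Hence $\gamma$ must traverse a diagonal $d \in \sigma \setminus \T$, with endpoints at cusps $c_1, c_2$ of $\T$, and is incident on $c_1$ and $c_2$.

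Second, I would use the block-triangular form of the transition matrix $M$ from the previous subsection to constrain which cusps can be foldable. The lower-left zero block tells us that for an infinitesimal branch $b$ the train route $h(b)$ lies entirely in the infinitesimal subgraph, and hence so does $h^j(b)$ for every $j$. Combined with property~(I2)---that a fold occurs over a real branch---this implies that both branches flanking a foldable cusp must be real: if one were infinitesimal, the shared initial branch would also be infinitesimal, violating~(I2).

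Finally, I would analyze $\gamma$'s local transitions at a diagonal endpoint $c_i$ to exhibit such a real-real cusp that $\gamma$ is incident on. Since $\gamma$ transitions from $d$ to a branch on the opposite side of the switch at $c_i$ and that continuation cannot be real by hypothesis, the opposite side must contain an infinitesimal or diagonal branch. This forces the switch at $c_i$ to sit at an infinitesimal polygon vertex, and places $c_i$ on the real-branch side of that vertex, so that $c_i$ is flanked by two real branches. The main obstacle is the last step: verifying that this identified real-real cusp actually folds under some iterate of $h$. This is the dynamical heart of the lemma and will require invoking the details of properties (I1)--(I2) of the BH algorithm in \cite{BH} beyond the summary given here, to rule out the possibility that the two flanking real branches stay combinatorially ``separated'' under all iterates of $h$.
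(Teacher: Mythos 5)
There is a genuine gap, and you have in fact flagged it yourself: your argument stops exactly at the point where the lemma's actual content lies. Identifying a cusp along $\gamma$ that is flanked by two real branches does not prove the lemma, because ``foldable'' is a dynamical property --- the flanking branches must actually fold under some iterate of $h$ --- and nothing in your local combinatorial analysis (diagonal endpoints, block-triangular form of $M$, property (I2)) rules out a real-real cusp that never folds. Since the statement to be proved is precisely that $\gamma$ meets a \emph{foldable} cusp, deferring this step to ``details of (I1)--(I2) beyond the summary'' leaves the proof incomplete; moreover it is not clear that a purely local argument of the kind you outline can ever close it, because foldability cannot be read off from the combinatorics of $\T$ near the cusp.

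The paper's proof avoids all of your intermediate steps and goes straight at the dynamics: suppose $\gamma$ passes over no real branch and is incident on no foldable cusp. Both properties are preserved under applying $f$ (images of infinitesimal and diagonal branches never acquire real branches except by folding at a foldable cusp, and the cusps $\gamma$ meets are by assumption never folded), so every iterate $f^j(\gamma)$ still misses all real branches. But the iterates of any curve carried by the invariant track must converge to the stable foliation of $f$, which is positive on every branch of $\T$ --- in particular on the real branches --- a contradiction. If you want to salvage your approach, the missing last step would have to import essentially this global convergence argument anyway, at which point the preliminary analysis of diagonals and real-real cusps becomes unnecessary.
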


\begin{proof}
Suppose that $\gamma$ does not pass over any real branch of $\T$ and none of the cusps it passes through are foldable. Any iterate of $\gamma$ must also have the same properties. But then, the iterates cannot converge to the stable foliation of $f$, giving a contradiction.
\end{proof}

 \end{enumerate}

%%%%%%%%%%%%%%%%%%%%%%%%%%%%%%%%%%%%%%%%%%%
%%%%%%%%%%%%%%%%%%%%%%%%%%%%%%%%%%%%%%%%%%%
\section{Lower bounds}\label{lbounds}
%%%%%%%%%%%%%%%%%%%%%%%%%%%%%%%%%%%%%%%%%%%
%%%%%%%%%%%%%%%%%%%%%%%%%%%%%%%%%%%%%%%%%%%

\begin{theorem}\label{lowerbd}
When the complexity $\xi(S)\geqslant 2$,
\begin{align*}
L_\C  (\Mod (S)) > \frac{1}{c_n\chi(S)^2+ 6 \vert \chi(S) \vert},
\end{align*}
where, as in Lemma \ref{p-matrix}, the constants are $c_0=162$ and $c_n=18$ for $n \geqslant 1$.
\end{theorem}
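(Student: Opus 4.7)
The proof adapts the Masur--Minsky argument from \cite{MM}, but uses the finer structure of the Bestvina--Handel invariant train track from Section \ref{BH} to replace their coarse constant with an explicit one. The idea is to assign to each simple closed curve an ``entry time'' $p(\gamma)$ measuring how many forward iterates of $f$ are needed to push $\gamma$ into $int(PE(\T))$, and then to show that $p$ varies slowly along any geodesic of $\C(S)$.

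\textbf{Setup.} Given a pseudo-Anosov $f$, puncture $S$ at the singularities of the stable foliation if $S$ is closed, and apply the Bestvina--Handel algorithm to produce the invariant train track $\T$ with real branches ${\cal R}$. Let $k < c_n |\chi(S)|^2$ be the integer of Lemma \ref{p-matrix}. Since $M_{\cal R}$ is Perron--Frobenius, a closed real train route exists, yielding a simple closed curve $\alpha$ carried by $\T$ that traverses at least one real branch. Lemma \ref{p-matrix} then gives $f^k(\alpha) \in int(PE(\T))$.

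\textbf{Entry time and Lipschitz estimate.} For any simple closed curve $\gamma$ set
\[
p(\gamma) \;=\; \min\bigl\{\, p \in \mathbb{Z} \;:\; f^p(\gamma) \in int(PE(\T)) \,\bigr\}.
\]
This minimum exists because forward iterates of $\gamma$ limit onto the stable foliation (which lies in $int(PE(\T))$) while backward iterates limit onto the unstable foliation (which is not carried by $\T$). By construction $p(\alpha) \leqslant k$, and directly from the definition $p(f^N(\alpha)) = p(\alpha) - N$. The central claim is that for any two disjoint simple closed curves $\gamma_1, \gamma_2$,
\[
|p(\gamma_1) - p(\gamma_2)| \;\leqslant\; k + 6|\chi(S)|.
\]
To prove it, apply $f^{p(\gamma_1)}$: then $f^{p(\gamma_1)}(\gamma_1) \in int(PE(\T))$ and $f^{p(\gamma_1)}(\gamma_2)$ is disjoint from it, so the nesting lemma (Lemma \ref{nesting}) forces $f^{p(\gamma_1)}(\gamma_2) \in PE(\T)$. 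The estimate then reduces to the sub-claim: \emph{if $\gamma \in PE(\T)$ then $f^m(\gamma) \in int(PE(\T))$ for some $0 \leqslant m \leqslant k + 6|\chi(S)|$}. If $\gamma$ crosses a real branch, Lemma \ref{p-matrix} gives $m \leqslant k$. Otherwise, Lemma \ref{inc-fold} places $\gamma$ incident on a foldable cusp, and property (I2) of Bestvina--Handel guarantees that within at most $6|\chi(S)|$ iterates the incident cusps fold, forcing $\gamma$ to cross a real branch, after which Lemma \ref{p-matrix} applies.

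\textbf{Conclusion.} Given a $\C(S)$-geodesic $\alpha = \gamma_0, \gamma_1, \ldots, \gamma_d = f^N(\alpha)$, telescoping the Lipschitz estimate along consecutive disjoint pairs yields
\[
N \;=\; p(\gamma_0) - p(\gamma_d) \;\leqslant\; \sum_{i=0}^{d-1} |p(\gamma_i) - p(\gamma_{i+1})| \;\leqslant\; d\bigl(k + 6|\chi(S)|\bigr).
\]
Dividing by $N$ and letting $N \to \infty$ gives
\[
\ell_\C(f) \;\geqslant\; \frac{1}{k + 6|\chi(S)|} \;>\; \frac{1}{c_n |\chi(S)|^2 + 6|\chi(S)|},
\]
the last strict inequality coming from $k < c_n |\chi(S)|^2$. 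Since the bound is independent of the pseudo-Anosov $f$, it descends to $L_\C(\Mod(S))$.

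\textbf{Main obstacle.} The technical heart is the sub-claim, specifically the bound of $6|\chi(S)|$ iterates for a curve in $PE(\T)$ carrying no real branch to be forced onto one. This requires careful combinatorial bookkeeping on the foldable cusps of $\T$ and how they are resolved under the train-track map $h$ induced by $f$; the constant $6|\chi(S)|$ must reflect a sharp bound on the combinatorics of the cusps of the Bestvina--Handel track.
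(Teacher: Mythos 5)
Your proposal follows essentially the same strategy as the paper: construct the Bestvina--Handel track $\T$, use Lemma~\ref{p-matrix} for curves meeting a real branch, use Lemma~\ref{inc-fold} plus a $6\vert\chi(S)\vert$ folding estimate for curves that do not, and feed the resulting uniform iterate $w=k+6\vert\chi(S)\vert$ into the Masur--Minsky nesting lemma. Your sub-claim (``$\gamma\in PE(\T)$ implies $f^m(\gamma)\in int(PE(\T))$ for some $m\leqslant k+6\vert\chi(S)\vert$'') is exactly the paper's Lemma~\ref{hooked}. The only real difference is packaging of the endgame: the paper iterates the inclusion $f^w(PE(\T))\subset int(PE(\T))$ to get a nested family $PE(\T_i)$ and applies Lemma~\ref{nesting} to a single well-chosen curve $\beta$ with $\beta\notin PE(\T)$, $f^w(\beta)\in PE(\T)$, whereas you encode the same nesting in an entry-time function $p(\gamma)$ that is $w$-Lipschitz across disjointness and telescopes along a geodesic. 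Both give $\ell_\C(f)\geqslant 1/w$; your version is a clean reformulation but buys nothing extra, and it costs you a slightly stronger hypothesis: you need $p(\gamma)$ to be finite for \emph{every} vertex of the geodesic, i.e.\ that every curve's forward orbit eventually enters $int(PE(\T))$, which requires a $\mathcal{PML}$-neighborhood argument (available in Masur--Minsky) that your one-line justification via convergence to the stable lamination does not quite supply, since $int(PE(\T))$ need not obviously contain a neighborhood of the stable class; the paper's formulation only needs one such curve.

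The step you flag as the ``main obstacle'' is genuinely the place where your write-up stops short of a proof, but it is closed in the paper by a short argument rather than heavy bookkeeping: an Euler characteristic count shows $\T$ has at most $6\vert\chi(S)\vert$ cusps, and if the cusp carrying the diagonal of $Supp(\mu)$ did not fold within $6\vert\chi(S)\vert$ iterates, its orbit would contain a periodic foldable cusp that never folds, contradicting Lemma~\ref{inc-fold}; property (I2) is only used to guarantee the fold is over a \emph{real} branch. You should also make explicit the block-triangular form of the transition matrices $M_{\sigma_j,\sigma_{j+1}}$ for the diagonal extensions (as the paper does), since that is what lets you promote Lemma~\ref{p-matrix}, a statement about branches of $\T$, to the statement that a curve carried by some $\sigma_0\in E(\T)$ with positive weight on a real branch lands in $int(PE(\T))$ after $k$ iterates. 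With those two points supplied, your argument is correct and yields the stated bound.
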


\begin{proof}
The idea is to combine Lemma~\ref{p-matrix} with the proof of Proposition 4.6 in \cite{MM} to obtain a better lower bound.

For any pseudo-Anosov $f \in \Mod(S)$, let $\T$ be the invariant train track of $f$ constructed by Bestvina and Handel. We shall show that after at most $(6 \vert \chi(S) \vert + k) $ iterates of $f$, where $k$ is the number of iterates in Lemma \ref{p-matrix}, we get the nesting behavior in the proof of Proposition 4.6 in \cite{MM}. This gives the lower bound for $L_\C(\Mod(S))$ as stated.

For the track $\T$, let ${\cal B}_\T$ be the set of the branches, and $\vert {\cal B}_{\T} \vert$ its cardinality.  Let $\sigma \in E(\T)$. By Lemma \ref{carry}, the image $f(\sigma)$ is carried by some diagonal extension $\sigma' \in E(\T)$. It follows immediately that $f$ sends switches of $\sigma$ to switches of $\sigma'$. Hence, for each such pair $(\sigma, \sigma')$, the transition matrix $M_{\sigma,\sigma'}: \mathbb{R}^{{\cal B}_{\sigma}} \rightarrow \mathbb{R}^{{\cal B}_{\sigma'}} $ associated to $f$ is unambiguously defined. Without loss generality, we may assume that the last $| {\cal B}_\T |$ coordinates correspond to ${\cal B}_\T$. Then, the matrix $M_{\sigma,\sigma'}$ has the form
\begin{align*}
M_{\sigma,\sigma'}=
\left( \begin{array}{c|c}
 * &*\\  \hline
 0 & M\\
  \end{array} \right),
\end{align*}
where $M$ is the transition matrix for $\T$ described in Lemma \ref{p-matrix}.
For any diagonal extension $\sigma_0 \in E(\T)$, for any $m>0$, we can construct a sequence of train tracks $\sigma_1,\sigma_2,\cdots,\sigma_m$ in $E(\T)$ such that
\begin{align*}
f(\sigma_0) \prec \sigma_1, f(\sigma_1) \prec \sigma_2, \cdots, f(\sigma_{m-1}) \prec \sigma_m,
\end{align*}
hence
\begin{align*}
f^m(\sigma_0)\prec \sigma_m.
\end{align*}
Let $M_{\sigma_j,\sigma_{j+1}}: \mathbb{R}^{{\cal B}_{\sigma_j}} \rightarrow \mathbb{R}^{{\cal B}_{\sigma_{j+1}}} $ be the transition matrices of $f$ associated to $f(\sigma_j) \prec \sigma_{j+1}$ in the sequence, and let $M_{\sigma_{0},\sigma_m}: \mathbb{R}^{{\cal B}_{\sigma_m}} \rightarrow \mathbb{R}^{{\cal B}_{\sigma_m}} $ be the transition matrix associated to $f^m$. Because $f$ maps switches to switches, the matrices satisfy
\begin{align*}
M_{\sigma_{0},\sigma_m} &= M_{\sigma_{m-1},\sigma_m} \times M_{\sigma_{m-2},\sigma_{m-1}} \times \cdots \times M_{\sigma_{1},\sigma_{2}} \times M_{\sigma_{0},\sigma_{1}} \\
&= \left( \begin{array}{c|c}
 * &*\\  \hline
 0 & M\\
  \end{array} \right) \times
  \left( \begin{array}{c|c}
 * &*\\  \hline
 0 & M\\
  \end{array} \right) \times \cdots \times
  \left( \begin{array}{c|c}
 * &*\\  \hline
 0 & M\\
  \end{array} \right) \times
  \left( \begin{array}{c|c}
 * &*\\  \hline
 0 & M\\
  \end{array} \right) \\
 &= \left( \begin{array}{c|c}
 * &*\\  \hline
 0 & M^m\\
  \end{array} \right).
\end{align*}
We now use Lemma~\ref{p-matrix} and Lemma~\ref{inc-fold} to prove the following lemma:

\begin{lemma}\label{hooked}
For any $\mu \in P(\sigma_0)$, there exists some positive integer $m$ such that $k \leqslant m \leqslant 6 \vert \chi(S) \vert+ k$, where $k$ is the number of iterates in Lemma~\ref{p-matrix}, the measure $f^m(\mu) \in P(\sigma_m)$ is positive on every branch in ${\cal B}_\T$, that is $f^m(\mu) \in int(PE(\T))$.
\end{lemma}
\begin{proof}
We consider the simplest case first:

\vskip 10pt

\noindent {\bf Case 1:} Suppose $\mu$ is positive on some real branch $\beta$ in ${\cal B}_\T$. By Lemma~\ref{p-matrix}, the transition matrix with respect to $\T$, for $f^k$ has the form:
\begin{align*}
M^k=\left( \begin{array}{cc}
 *  & \widehat{B}\\
  0  & M^k_{\cal R}\\
\end{array} \right),
\end{align*}
where $\widehat{B}$ and $M^k_{\cal R}$ are positive matrices. In particular, the image path $f^k(\beta)$ passes over every branch in ${\cal B}_\T$. Hence, the measure $f^k(\mu) =  M_{\sigma_{0},\sigma_k}(\mu)$ in $P(\sigma_k)$ is positive on every branch in ${\cal B}_\T$. The same reasoning applied to all integers $m \geqslant k$ implies that the measure $f^m(\mu) = M_{\sigma_0, \sigma_m}(\mu)$ is positive on every branch of ${\cal B}_\T$, finishing the proof of Lemma~\ref{hooked} in this case.

\vskip 10pt

\noindent{\bf Case 2:} Suppose $\mu$ is not positive on any real branch. We shall show that in $j \leqslant 6 \chi(S)$ iterates the measure $f^j(\mu) = M_{\sigma_0, \sigma_j}(\mu)$ is positive on some real branch, reducing us to Case 1. This is done in two steps: In Step 1, we show that $Supp(\mu)$ contains a diagonal $d$ that is incident on a foldable cusp $c$. In Step 2, we show that the branches $b_1$ and $b_2$ that flank $c$, fold over a real branch $b$ in $j \leqslant 6\chi(S)$ iterates. Then $f^j(d)$ also passes over $b$ from which it follows that $f^j(\mu)$ assigns positive weight to $b$.

\vskip 10pt

\noindent {\it Step 1:}
Suppose $\mu$ is positive on some simple closed curve $\gamma$ carried by $\sigma_0$. By Lemma~\ref{inc-fold}, the curve $\gamma$ must be incident on a foldable cusp. Hence, $Supp(\mu)$ contains a diagonal $d$ that is incident on a foldable cusp $c$.

\vskip 10pt

\noindent {\it Step 2:} Let $b_1$ and $b_2$ be the branches that flank $c$. Let $j$ be the smallest iterate in which $b_1$ and $b_2$ fold. By Part (3) of Section~\ref{BH}, the branch $b$ that they fold over is real. We claim that $j \leqslant 6 \vert \chi(S) \vert$.  By an Euler characteristic calculation, the total number of cusps is at most $6 \vert \chi(S) \vert$. If $b_1$ and $b_2$ do not fold within $6 \vert \chi(S) \vert$ iterates, then there is a foldable cusp $c'$ such that $f^a(c') = c'$ for some iterate $f^a$. But then $f^{ma}(c') = c'$ for all positive integers $m$. Thus, $c'$ never gets folded giving a contradiction. This proves the claim.

\vskip 10pt

\noindent Finally, combining this with Case 1, we conclude that for $m = j +k  \leqslant 6 \vert \chi(S) \vert + k$, the measure $f^m(\mu)$ is positive on every branch in ${\cal B}_\T$ finishing the proof of Lemma~\ref{hooked}.
\end{proof}
\noindent Finally, Lemma~\ref{hooked} implies that for any $\sigma_0 \in E(\T)$, and for any $\mu \in P(\sigma_0)$,
\begin{align*}
f^w(\mu) \in int(PE(\T)),
\end{align*}
where $w = 6 \vert \chi(S)\vert + k$. Hence,
\begin{align}\label{nest}
f^w (PE(\T)) \subset int(PE(\T)).
\end{align}
Now set $\T_1=\T$, and for each positive integer $i \geqslant 1$, let $\T_i=f^{i w}(\T)$. The inclusion \eqref{nest} implies $PE(\T_{i+1}) \subset int(PE(\T_i))$. Choose $\beta \in \C(S) \setminus PE(\T_1)$ such that $f^w(\beta) \in PE(\T_1)$. By applying Lemma~\ref{nesting}, we have $d_\C(f^{iw}(\beta),\beta) \geqslant i$. Hence
\begin{align*}
\ell_\C(f^w)=\liminf_{ i \rightarrow \infty}\frac{d_\C(f^{iw}(\beta),\beta)}{i} \geqslant \liminf_{i \rightarrow \infty}\frac{i}{i} =1.
\end{align*}
By Lemma \ref{power}, we have $\ell_\C(f^w)=w \ell_\C(f)$. So
\begin{align*}
\ell_\C(f) \geqslant \frac{1}{w} > \frac{1}{c_n \chi(S)^2 + 6 \vert \chi(S) \vert},
\end{align*}
where $c_0=162$ and $c_n=18$ for $n\geqslant 1$.
\end{proof}

%%%%%%%%%%%%%%%%%%%%%%%%%%%%%%%%%%%%%%%%%%%%%%%%%%%%%%%%%%%%%%%%%%%%%
\section{Upper bound}
%%%%%%%%%%%%%%%%%%%%%%%%%%%%%%%%%%%%%%%%%%%%%%%%%%%%%%%%%%%%%%%%%%%%%
Next, for a closed surface $S$, we prove an upper bound for $L_\C (\Mod (S))$ of the same order.
\begin{theorem}\label{upperbd}
For a closed surface of genus $g \geqslant 2$,
\begin{align*}
L_\C (\Mod (S))\leqslant \frac{4}{g^2+g-4}.
\end{align*}
\end{theorem}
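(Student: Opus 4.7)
The plan is to prove the upper bound by explicit construction. For each $g \geq 2$, I would exhibit a pseudo-Anosov $f = f_g \in \Mod(S_g)$, an essential simple closed curve $\alpha$ on $S_g$, and a positive integer $N = N(g)$ satisfying $N \geq (g^2+g-4)/4$, such that $d_\C(\alpha, f^N(\alpha)) \leq 1$. Given such data, Lemma~\ref{power} applied along the subsequence $\{jN\}_{j\geq 1}$ yields
\[
\ell_\C(f) \;=\; \frac{\ell_\C(f^N)}{N} \;\leq\; \frac{d_\C(\alpha, f^N(\alpha))}{N} \;\leq\; \frac{4}{g^2+g-4},
\]
which immediately gives the claimed upper bound on $L_\C(\Mod(S))$.

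For Step~1 (constructing $f_g$), I would follow Thurston's construction of pseudo-Anosov maps from filling pairs of multicurves, as in Farb--Leininger--Margalit: choose multicurves $A$ and $B$ whose union fills $S_g$ and whose intersection matrix has spectral radius strictly greater than $2$, and set $f_g = T_A \circ T_B^{-1}$. The payoff from FLM's setup is to take $A$ and $B$ invariant under a cyclic symmetry of $S_g$ of order $\sim g$, so that the action on relevant curves descends to a simpler quotient where it can be analyzed combinatorially. The particular choice of $A$ and $B$ is what distinguishes our construction from FLM's and is the source of the improved bound.

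For Step~2 (bounding the distance), the main task is to find a curve $\gamma$ disjoint from both $\alpha$ and $f^N(\alpha)$ (the simplest case being $\gamma = \alpha$ itself, i.e., $f^N(\alpha)$ actually disjoint from $\alpha$). The candidate for $\alpha$ is a curve compatible with the cyclic symmetry---for instance a component of $A$ or $B$, or a curve that lifts a preserved curve from the quotient. Using the symmetry and the explicit combinatorics of how $f_g$ acts on a neighborhood of $A \cup B$, one can track the image $f^N(\alpha)$ carefully enough to exhibit $\gamma$ explicitly.

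The main obstacle is precisely Step~2: distances in $\C(S)$ are in general extremely hard to compute, and beating the FLM bound requires finding a disjoint bridging curve that survives $N$ iterates, with $N$ of order $g^2$. The key leverage is the cyclic symmetry, which reduces the problem to one on a surface of bounded complexity, where the action of (a suitable power of) $f_g$ on the relevant piece of the symmetry-quotient becomes essentially a simple combinatorial move on curves. Verifying this cleanly---and checking that the resulting $N$ really reaches $(g^2+g-4)/4$ rather than falling short by a factor---is the delicate bookkeeping. Note that for $g = 2$ the bound $4/(g^2+g-4) = 2$ is automatic (since every two distinct vertices of $\C(S)$ are at distance at least $1$), so the construction only needs to be carried out nontrivially for $g \geq 3$.
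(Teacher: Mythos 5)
Your reduction framework is fine and matches the paper's: exhibit an explicit pseudo-Anosov, control $d_\C(\alpha,f^N(\alpha))$ for $N$ of order $g^2$, and convert via Lemma~\ref{power}. But the actual mathematical content of the theorem is exactly the part you defer as ``delicate bookkeeping,'' and the construction you sketch is unlikely to deliver it. You propose a Thurston-type map $f=T_A T_B^{-1}$ with $A\cup B$ filling and merely invariant under a cyclic symmetry. Such an $f$ twists along all $\sim g$ components of $A$ and $B$ at every iterate, so the image of a curve spreads over new subsurface ``holes'' at a definite rate per iterate; that is the regime of the Farb--Leininger--Margalit bound and it does not produce the waiting time of order $g$ per bounded increase in support that an order-$1/g^2$ bound requires. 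The paper's example is different in a crucial way: $f=\rho\, T_{c_1}T_{b_1}^{-1}T_{a_1}$, where the twisting is supported in a \emph{single} bounded-complexity region and the rotation $\rho$ is part of $f$ itself (not just a symmetry commuting with $f$). Then the image of $a_g$ only grows when the rotation carries it back into the twisting region, and one verifies explicitly that $f^{j(g+1)}(a_g)$ lies in a regular neighborhood ${\cal N}(c_g a_{g-1}b_{g-1}c_{g-1}\cdots c_{g-2j+1})$, i.e.\ the support advances by two holes per $g+1$ iterates. This inclusion chain, which shows the image stays disjoint from the fixed bridging curve $a_1$ for $k=(g-1)+\lfloor\frac{g-1}{2}\rfloor(g+1)\geqslant\frac{g^2+g-4}{2}$ iterates, is the proof; without it (or an equivalent explicit tracking for your candidate map) you have a plan, not an argument.

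Two further concrete problems. First, your setup is internally inconsistent by a factor of $2$: you require $d_\C(\alpha,f^N(\alpha))\leqslant 1$ with $N\geqslant\frac{g^2+g-4}{4}$, but your Step~2 aims to produce a curve $\gamma$ disjoint from both $\alpha$ and $f^N(\alpha)$, which only gives $d_\C(\alpha,f^N(\alpha))\leqslant 2$; with a bridging curve you must instead reach $N\geqslant\frac{g^2+g-4}{2}$ (as the paper does), so your bookkeeping target is off by a factor of two. Second, your claim that the case $g=2$ is ``automatic'' because distinct vertices of $\C(S)$ are at distance at least $1$ is not valid: a lower bound on distances says nothing about the existence of a pseudo-Anosov with $\ell_\C(f)\leqslant 2$, and $L_\C(\Mod(S_2))\leqslant 2$ still requires exhibiting an example (the paper checks $g=2$ directly via $f^2(a_2)\subset{\cal N}(a_2b_2c_2)$, disjoint from $a_1$).
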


\begin{proof}
It is sufficient to find a pseudo-Anosov mapping class $f$ such that $\ell_\C(f) \leqslant \frac{4}{g^2+g-2}$. We show this for the pseudo-Anosov map of a closed surface of genus $g$ constructed by Penner in \cite{Pe}.  The Penner example is as follows: For the closed surface of genus $g$ in  Figure~\ref{pe-ex}, let $f =\rho T_{c_{1}} T_{b_{1}}^{-1}  T_{a_{1}}$, where $T_{a_{1}}$ is a positive Dehn twist along $a_1$, $\rho(a_i)=a_{i-1}$, for $i=2,\cdots,g$ and $\rho(a_1)=a_g$ and similarly for the $b_i$'s and $c_i$'s.
\begin{figure}[htb]
\begin{center}
\psfrag{a1}{$a_1$}
\psfrag{a2}{$a_2$}
\psfrag{a3}{$a_3$}
\psfrag{a4}{$a_4$}
\psfrag{a5}{$a_5$}
\psfrag{ag}{$a_{g}$}
\psfrag{ag-1}{$a_{g-1}$}
\psfrag{ag-2}{$a_{g-2}$}
\psfrag{ag-3}{$a_{g-3}$}
\psfrag{ag-4}{$a_{g-4}$}
\psfrag{ag-5}{$a_{g-5}$}
\psfrag{b1}{$b_1$}
\psfrag{b2}{$b_2$}
\psfrag{b3}{$b_3$}
\psfrag{b4}{$b_4$}
\psfrag{b5}{$b_5$}
\psfrag{bg}{$b_{g}$}
\psfrag{bg-1}{$b_{g-1}$}
\psfrag{bg-2}{$b_{g-2}$}
\psfrag{bg-3}{$b_{g-3}$}
\psfrag{bg-4}{$b_{g-4}$}
\psfrag{bg-5}{$b_{g-5}$}
\psfrag{c1}{$c_1$}
\psfrag{c2}{$c_2$}
\psfrag{c3}{$c_3$}
\psfrag{c4}{$c_4$}
\psfrag{c5}{$c_5$}
\psfrag{cg}{$c_{g}$}
\psfrag{cg-1}{$c_{g-1}$}
\psfrag{cg-2}{$c_{g-2}$}
\psfrag{cg-3}{$c_{g-3}$}
\psfrag{cg-4}{$c_{g-4}$}
\includegraphics[width=0.75\textwidth]{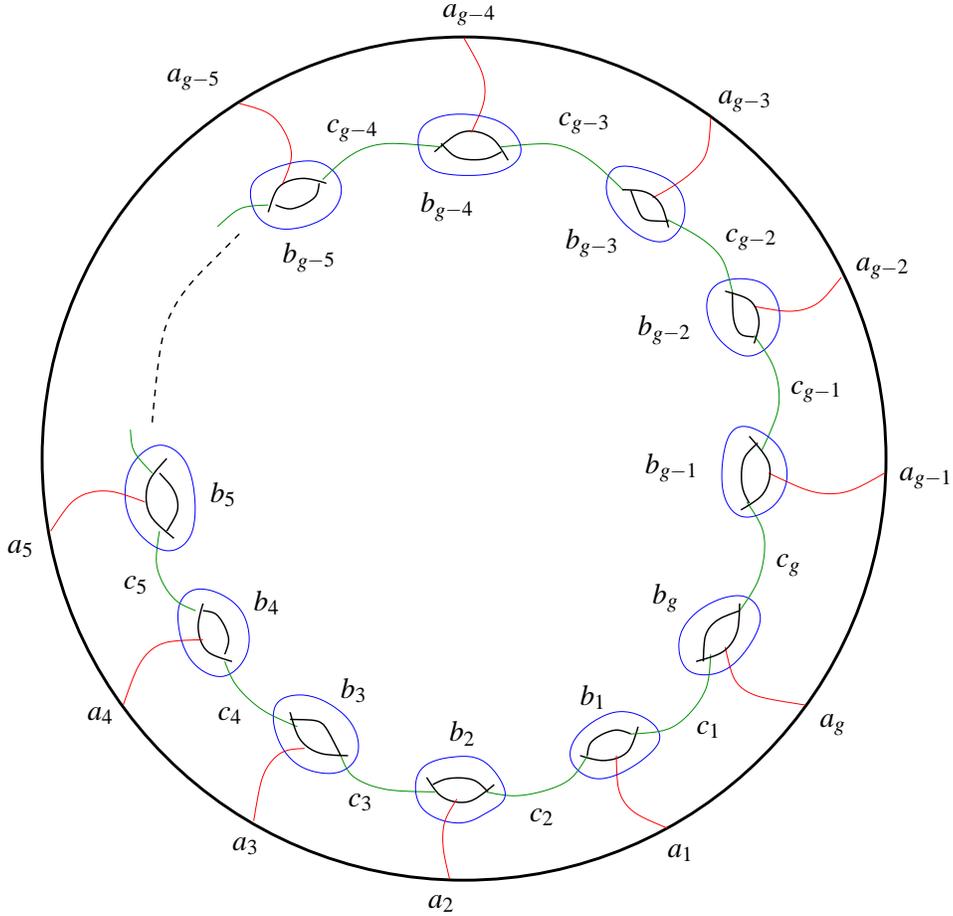}
\caption{$f=\rho  T_{c_{1}} T_{b_{1}}^{-1}  T_{a_{1}} \in \Mod (S)$.}\label{pe-ex}
\end{center}
\end{figure}

\noindent Since $\ell_\C(f)$ is independent of the initial choice of curve to apply iterations to, we choose the curve $a_g$ and show that for some $k \geqslant \frac{g^2+g-4}{2}$,
\begin{align}\label{dist-estimate}
d_\C (f^k(a_g),a_g)\leqslant 2.
\end{align}
By the triangle inequality,
\begin{align*}
\ell_\C(f^k)=\liminf_{j\rightarrow \infty}\frac{d_\C (f^{jk}(a_g),a_g)}{j} \leqslant \liminf_{j\rightarrow \infty}\frac{2j}{j}=2,
\end{align*}
and by Lemma~\ref{power},
\begin{align*}
\ell_\C(f)\leqslant \frac{2}{k} \leqslant \frac{4}{g^2+g-4}.
\end{align*}

\noindent For a sequence of curves $\alpha_r \in \{ a_i,b_i,c_i \}_{i=1}^g$ such that $\alpha_1 \cup \cdots \cup \alpha_k$ is connected, we denote the regular neighborhood of the union $\alpha_1 \cup \cdots \cup \alpha_k$ by ${\cal N}(\alpha_1, \cdots, \alpha_k)$. For $g=2$, it is easy to see
\begin{align*}
f(a_2)=a_1 \text{, } f^2(a_2) \subset {\cal N}(a_2b_2c_2).
\end{align*}
Thus $f^2(a_2)$ and $a_1$ are disjoint. Since $a_1$ and $a_2$ are disjoint, by triangle inequality $d_\C (f^2(a_2),a_2)\leqslant 2$ and $2 \geqslant \frac{2^2+2-4}{2}$, and we are done.

To show \eqref{dist-estimate} in general, the key idea is as follows: Suppose that $f^j$ is the smallest iterate in which $f^j(a_g)$ is spread over $k$ ``holes''. Then it takes waiting time $(g+1)$ for the images to sweep over $(k+2)$ holes. In other words, $f^{j+g+1}$ is the smallest iterate in which the image of $a_g$ sweeps over $(k+2)$ holes.  To be precise, among the neighborhoods defined above, we keep track of which is the ``smallest'' one containing the image of $a_g$.

In first $(g-1)$ iterates $a_g$ gets rotated till it becomes $a_1$ i.e., $f^{g-1}(a_g) = a_1$.  In two iterates that follow:
\begin{align*}
f^{g}(a_g) \subset {\cal N}(a_g b_g c_g) \text{ ,  }
f^{g+1}(a_g) \subset {\cal N}(c_g a_{g-1} b_{g-1}c_{g-1}).
\end{align*}
In the same manner, continuing the iterations, notice that:
\begin{align*}
f^{2(g+1)}(a_g) &\subset
{\cal N}\left(c_g a_{g-1} b_{g-1}c_{g-1}\cdots a_{g-3}b_{g-3}c_{g-3}\right).\\
f^{3(g+1)}(a_g) &\subset
{\cal N}\left(c_ga_{g-1}b_{g-1}c_{g-1}\cdots a_{g-5}b_{g-5}c_{g-5} \right).
\end{align*}
We observe that after each $f^{g+1}$ iterates the subscript for $c$ rightmost inside ${\cal N}$, decreases by 2. In other words, it requires $(g+1)$ iterates to increase by 2, the ``complexity'' of the image of $a_g$. Here, we simplify notation as follows:
\begin{align*}
{\cal N}(c_g * c_{g-i}) := {\cal N}(c_g a_{g-1} b_{g-1} \cdots  a_{g-i}b_{g-i} c_{g-i})
\end{align*}
Then, we have
\begin{align*}
f^{g+1}(a_g)&\subset {\cal N} (c_g * c_{g-1}),\\
f^{2(g+1)}(a_g)&\subset {\cal N}(c_g * c_{g-3})\\
f^{3(g+1)}(a_g) &\subset {\cal N}(c_g * c_{g-5})\\
\vdots \\
f^{\lfloor \frac{g-1}{2} \rfloor(g+1)}(a_g) &\subset {\cal N}\left( c_g \ast c_{g-2\lfloor \frac{g-1}{2} \rfloor+1} \right),\\
f^{g-1} \left( f^{\lfloor \frac{g-1}{2} \rfloor(g+1)}(a_g) \right) &\subset {\cal N}\left( c_1 a_g b_g c_g \ast c_{g-2\lfloor \frac{g-1}{2} \rfloor+1}\right).
\end{align*}
From the above inclusions, we note that $f^{g-1}\left( f^{\lfloor \frac{g-1}{2} \rfloor(g+1)}(a_g) \right)$ is disjoint from $a_1$, and of course $a_1$ and $a_g$ are disjoint initially. Hence
\begin{align*}
d_\C\left(a_g, f^k(a_g)\right) \leqslant 2,
\end{align*}
where
\[
k=(g-1)+  \lfloor \frac{g-1}{2} \rfloor(g+1) \geqslant \frac{2(g-1)+(g-2)(g+1)}{2} = \frac{g^2+g-4}{2}.
\]
\end{proof}

More generally, as described in Appendix 5.2 of \cite{Ts}, a method similar to \cite{Pe} constructs pseudo-Anosov homeomorphisms of certain punctured surfaces from pseudo-Anosov homeomorphisms of closed surfaces. We start with the Penner pseudo-Anosov map $f$ of the closed surface $S_g$. We add in punctures in some or all of the complementary regions according to the criteria of Theorem 3.1 in \cite{Pe-const}. Then, the restriction of $f$ is a pseudo-Anosov on the punctured surface. A proof similar to Theorem~\ref{upperbd} provides upper bounds on $L_{\C}(\Mod(S_{g,n}))$ of the order $1/\chi(S_{g,n})^2$. We list the cases in which we get $1/\chi(S_{g,n})^2$ type upper bounds:
\begin{enumerate}
\item For punctured tori with $n$ even: we use the example in Appendix 5.1 of \cite{Ts}.
\item For $g\geqslant 5$ and $n=g-1$ or $2g-2$: we use Example 1 in Appendix 5.2  of \cite{Ts} .
\item For $g\geqslant 3$ and $n\leqslant 4$: we use Example 2 in Appendix 5.2  of \cite{Ts} .
\item For $g\geqslant 2$ and $n=1$, $2$, $g$, $g+1$ or $g+2$: We use Penner's example in Theorem~\ref{upperbd}, puncturing the surface at the appropriate points.
\end{enumerate}

In some cases, the upper bound can be of the order of $1/\vert \chi(S_{g,n})\vert$. For example, when $g= 2$ and $n$ is varying, the example in Section 4 of \cite{Ts} gives the bound
\begin{align*}
L_\C  (\Mod (S_{g,n})) \leqslant \frac{20}{n-4},
\end{align*}
for all $n \geqslant 4$. We propose the following conjecture:
\begin{conjecture}
For fixed $g\geqslant 2$ and $n$ varying, $L_\C  (\Mod (S_{g,n}))$ is of the order of $\frac{1}{|\chi(S_{g,n})|}$ as $n \to \infty$.
\end{conjecture}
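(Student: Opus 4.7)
The plan is to establish matching upper and lower bounds of order $1/|\chi(S_{g,n})|$ for fixed $g \geqslant 2$ as $n \to \infty$. The upper half should be accessible by generalizing the constructions already in the paper; the lower half is where the substantive work lies.

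For the upper bound, I would adapt the rotational argument of Theorem~\ref{upperbd} to incorporate large numbers of punctures. Start with Penner's map $f = \rho T_{c_1} T_{b_1}^{-1} T_{a_1}$ on the closed surface $S_g$ and puncture at a $\rho$-invariant collection of singular points of the stable foliation that meets every complementary region in the sense required by Theorem~3.1 of \cite{Pe-const}; then $f$ restricts to a pseudo-Anosov on $S_{g,n}$. The combinatorial action of $f$ on the chain $\{a_i, b_i, c_i\}$ is unaffected, but each complementary region is now subdivided by punctures, so the ``sweep'' in the proof of Theorem~\ref{upperbd} must traverse a number of punctured regions growing linearly in $n$ before it can hook up with a curve disjoint from $a_g$. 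Carrying out the same bookkeeping with neighborhoods ${\cal N}(c_g \ast c_{g-i})$ refined to track puncture orbits should give $d_\C(f^k(a_g), a_g) \leqslant 2$ for some $k$ of order $|\chi(S_{g,n})|$, and Lemma~\ref{power} then yields $\ell_\C(f) \leqslant C/|\chi(S_{g,n})|$.

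For the lower bound the target is to replace the quadratic factor $c_n \chi^2$ in Theorem~\ref{lowerbd} by a linear factor in $|\chi|$. The quadratic dependence enters only in Lemma~\ref{p-matrix}, where Perron--Frobenius is invoked on the $r \times r$ block $M_{\cal R}$ with $r \leqslant 3|\chi|-3$; the waiting time $2rq$ comes from the standard bound that $M_{\cal R}^{2r}$ is positive. For surfaces with many punctures I would try to exploit structural features of $M_{\cal R}$ that do not appear in the closed case: most real branches of the Bestvina--Handel track cluster in small groups around punctures and their dynamics should decompose into many small irreducible blocks that interact only through a limited number of ``long'' branches spanning between punctures. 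If such a decomposition can be made precise, it should yield that $M_{\cal R}^{Cr}$ is already positive for an absolute constant $C$, reducing the waiting time in Lemma~\ref{p-matrix} to linear order in $|\chi|$.

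The main obstacle will be this improvement of Perron--Frobenius: the bound ``$M_{\cal R}^{2r}$ positive'' is essentially sharp for arbitrary irreducible non-negative matrices, so any improvement must genuinely use the geometry of Bestvina--Handel tracks on surfaces with many punctures, not just linear algebra. A back-up line of attack would abandon train tracks in favor of subsurface projections, trying to combine the Behrstock inequality with a counting argument that bounds $\ell_\C(f)$ below by a constant over the number of subsurfaces in which $f$ can produce a large projection in a single iterate; since any such subsurface consumes a positive amount of complexity from $S_{g,n}$, there are at most $O(|\chi|)$ of them, which is the right order. I do not expect a short proof in either approach, and it is plausible that the conjecture is wrong in some corner; what it really calls for is a new invariant linear in $|\chi|$ that obstructs small translation length.
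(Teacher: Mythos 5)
The statement you are addressing is stated in the paper only as a conjecture; the paper offers no proof of it, so there is nothing to compare your argument against, and your proposal, as you yourself acknowledge, is a research plan rather than a proof. Both halves of the plan have concrete gaps. For the upper bound, puncturing Penner's map $f$ at an $f$-invariant set does not produce the decay you want: the action of $f$ on the chain $\{a_i,b_i,c_i\}$ and the chain of inclusions in the proof of Theorem~\ref{upperbd} are unchanged, so the same exponent $k\approx (g^2+g-4)/2$ already gives $d_\C(f^k(a_g),a_g)\leqslant 2$ on the punctured surface, and the resulting bound $\ell_\C(f)\leqslant 2/k$ is a constant in $n$ for fixed $g$, not $O(1/|\chi(S_{g,n})|)$. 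Your claim that the sweep ``must traverse a number of punctured regions growing linearly in $n$'' is not substantiated by any mechanism in the construction; to get an upper bound decaying in $n$ one needs a family of maps adapted to the punctures (this is exactly what the example in Section 4 of \cite{Ts} does for $g=2$, giving $L_\C\leqslant 20/(n-4)$), and the paper's list of punctured cases using Penner's map covers only boundedly many punctures (e.g.\ $n\leqslant g+2$), not $n\to\infty$.

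For the lower bound, the proposed improvement of Lemma~\ref{p-matrix} rests on a heuristic that contradicts what is known: Bestvina--Handel prove that $M_{\cal R}$ is a single irreducible (Perron--Frobenius) block, so it cannot ``decompose into many small irreducible blocks,'' and for general irreducible non-negative matrices the primitivity exponent can genuinely be quadratic in the size (Wielandt), so a linear waiting time would require a structural theorem about Bestvina--Handel tracks on highly punctured surfaces that you have not formulated, let alone proved. The fallback via the Behrstock inequality is also not a known route to a lower bound on $\ell_\C$ of the main surface: translation length on $\C(S)$ is not controlled from below by counting subsurfaces admitting large projections per iterate, and no inequality in that direction is supplied. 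In short, the conjecture remains open, and neither branch of your proposal closes the essential gap, which is a lower bound of order $1/|\chi(S_{g,n})|$ (and, for $g>2$, even the matching upper bound).
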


%%%%%%%%%%%%%%%%%%%%%%%%%%%%%%%%
\bibliographystyle{amsalpha}

\bibliography{bib}

%%%%%%%%%%%%%%%%%%%%%%%%%%%%%%%%

\bigskip

%\noindent Vaibhav Gadre: \newline
\noindent Department of Mathematics, Harvard University, Cambridge, MA 02138. \newline
\noindent \texttt{vaibhav@math.harvard.edu}

\vskip15pt 

%\noindent Chia-yen Tsai: \newline
\noindent Department of Mathematics, University of Illinois, Urbana-Champaign, IL 61801. \newline \noindent \texttt{cyt1230@gmail.com}

\end{document}